
\documentclass[12pt,a4paper]{article}%
\usepackage[utf8]{inputenc}
\usepackage{hyperref}
\usepackage{amsmath}
\usepackage{amsfonts}
\usepackage{amssymb}
\usepackage{xcolor}
\usepackage[space]{grffile}
\usepackage{graphicx}%
\setcounter{MaxMatrixCols}{30}
\providecommand{\U}[1]{\protect\rule{.1in}{.1in}}
\newtheorem{theorem}{Theorem}

\newtheorem{corollary}[theorem]{Corollary}

\newtheorem{lemma}[theorem]{Lemma}

\newtheorem{proposition}[theorem]{Proposition}

\newtheorem{observation}[theorem]{Observation}

\newenvironment{proof}[1][Proof]{\noindent\textbf{#1.} }{\ \hfill \rule{0.5em}{0.5em}\bigskip}
\graphicspath{{D:/Dropbox/Riste-Sedlar/Yu Yang/Paper2/Slike/}}

\textwidth=16cm
\hoffset=-1.2cm
\voffset=-2.cm
\textheight=23cm
\begin{document}

\title{The subpath number of cactus graphs}
\author{Martin Knor$^{1}$, Jelena Sedlar$^{2,4}$, Riste \v{S}krekovski$^{3,4,5}$, Yu
Yang$^{6}$\\{\small $^{1}$ \textit{Slovak University of Technology, Bratislava, Slovakia
}}\\[0.1cm] {\small $^{2}$ \textit{University of Split, FGAG, Split, Croatia }}\\[0.1cm] {\small $^{3}$ \textit{University of Ljubljana, FMF, Ljubljana,
Slovenia }}\\[0.1cm] {\small $^{4}$ \textit{Faculty of Information Studies, Novo Mesto,
Slovenia }}\\[0.1cm] {\small $^{5}$ \textit{Rudolfovo - Science and Technology Centre, Novo
mesto}\textit{, Slovenia }}\\[0.1cm] {\small $^{6}$ \textit{School of Software, Pingdingshan University,
Pingdingshan, China}}}
\maketitle

\begin{abstract}
The subpath number of a graph $G$ is defined as the total number of subpaths
in $G$, and it is closely related to the number of subtrees, a well-studied
topic in graph theory. This paper is a continuation of our previous paper
\cite{Knor2025}, where we investigated the subpath number and identified
extremal graphs within the classes of trees, unicyclic graphs, bipartite
graphs, and cycle chains. Here, we focus on the subpath number of cactus
graphs and characterize all maximal and minimal cacti with $n$ vertices and
$k$ cycles. We prove that maximal cacti are cycle chains in which all interior
cycles are triangles, while the two end-cycles differ in length by at most
one. In contrast, minimal cacti consist of $k$ triangles, all sharing a common
vertex, with the remaining vertices forming a tree attached to this joint
vertex. By comparing extremal cacti with respect to the subpath number to
those that are extremal for the subtree number and the Wiener index, we
demonstrate that the subpath number does not correlate with either of these
quantities, as their corresponding extremal graphs differ.

\end{abstract}

\section{Introduction}

The number of non-empty subtrees, denoted by $N(G)$, in a graph $G$ was first
studied in the context of trees \cite{Szekely2005}. Various properties of
$N(G)$ have been explored for several subclasses of trees \cite{Kirk2008,
Szekely2007, Zhang2013}, and more recently, research on the number of subtrees
has been extended to certain classes of general graphs \cite{Cacti2022,
Xu2021}. For further interesting results on this topic, we refer the reader to
\cite{Czabarka2008, YuYang1, YuYang2, YuYang3}. The subtree number can attain
extremely high values, making its exact evaluation challenging. Motivated by
this, in our previous paper \cite{Knor2025}, we introduced the so-called
subpath number, which seems to be more tractable.

The subpath number of a graph $G$ is defined as the total number of all
subpaths in $G$, including those of zero length. For this graph invariant, we
established several preliminary results, such as the exact value of the
subpath number for trees and unicyclic graphs, as well as its behavior under
edge insertion, which led to identifying the extremal graphs among all
connected graphs on $n$ vertices. Additionally, we examined bipartite graphs
and cycle chains, determining the extremal graphs within each of these
families. Moreover, for cycle chains, we provided an exact formula for the
subpath number.

The Wiener index $W(G)$ of a graph $G$ is defined as the sum of the distances
over all pairs of vertices in $G$. Introduced in Wiener's seminal paper
\cite{Wiener}, it was originally shown to correlate with the chemical
properties of certain molecular compounds. Since then, the Wiener index has
become one of the most extensively studied indices in chemical graph theory;
for an overview of key results, we refer the reader to the surveys
\cite{risteSurvey1, risteSurvey2}. Interestingly, a "negative" correlation has
been observed between the number of subtrees and the Wiener index: in many
graph classes, the graph that maximizes the number of subtrees also minimizes
the Wiener index, and vice versa. This phenomenon has been noted in several
graph families, including cactus graphs \cite{Xu2022}.

Motivated by these observations, in this paper, we investigate the behavior of
the subpath number in the class of cactus graphs, where both the number of
vertices and the number of cycles are prescribed. We fully characterize the
cacti that minimize and maximize the subpath number. Furthermore, we
demonstrate that the subpath number does not exhibit a direct correlation with
either the Wiener index or the subtree number, as the extremal cactus graphs
with respect to the subpath number differ from those in the other two cases.
This suggests that the subpath number is an interesting graph invariant worthy
of independent study.

\section{Preliminaries}

For a graph $G$, the \emph{subpath number} is defined as the number of paths
in $G$, including trivial paths of length $0$. The subpath number of a graph
$G$ is denoted by $\mathrm{pn}(G)$. Before delving into specific cases, let us
first explore some fundamental properties of this quantity.

We begin by considering trees with $n$ vertices. It is well known that every
pair of vertices in a tree is connected by a unique path, which leads to the
following observation.

\begin{observation}
\label{Obs_tree} If $T$ is a tree on $n$ vertices, then ${\mathrm{pn}}(T) =
\binom{n+1}{2}$.
\end{observation}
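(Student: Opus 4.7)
The plan is to split the count of subpaths according to length. A subpath of length $0$ is exactly a single vertex, so there are precisely $n$ such trivial subpaths. For subpaths of positive length, I would use the standard characterization of trees as connected acyclic graphs: between any two distinct vertices $u,v$ there is exactly one $u$--$v$ path. Consequently, the non-trivial subpaths of $T$ are in bijection with the unordered pairs of distinct vertices of $T$, giving $\binom{n}{2}$ of them.

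Adding the two contributions yields
\[
\mathrm{pn}(T) = n + \binom{n}{2} = \binom{n+1}{2},
\]
using the Pascal-style identity $\binom{n}{1}+\binom{n}{2}=\binom{n+1}{2}$.

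There is no real obstacle here; the only point that needs to be invoked explicitly is the uniqueness of the $u$--$v$ path in a tree, which justifies the bijection between non-trivial subpaths and pairs of vertices. Everything else is a one-line binomial computation.
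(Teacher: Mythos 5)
Your proof is correct and follows exactly the reasoning the paper intends: the observation is derived from the unique-path property of trees, counting $n$ trivial subpaths plus $\binom{n}{2}$ nontrivial ones in bijection with unordered pairs of distinct vertices. Nothing further is needed.
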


Since unicyclic graphs are obtained from trees by introducing a single edge, a
natural next step is to analyze unicyclic graphs with $n$ vertices. In a
unicyclic graph $G$ containing a cycle of length $g$, removing the edges of
the cycle results in precisely $g$ connected components. The following result,
established in \cite{Knor2025}, provides an explicit formula for the subpath
number of such graphs.

\begin{proposition}
\label{Prop_unicyclic} Let $G$ be a unicyclic graph on $n$ vertices with the
cycle $C$ of length $g$. Denote by $n_{1},n_{2},\dots,n_{g}$ the number of
vertices in the components that remain after removing all edges of $C$. Then,
\[
{\mathrm{pn}}(G)=n+2\binom{n}{2}-\binom{n_{1}}{2}-\binom{n_{2}}{2}%
-\dots-\binom{n_{g}}{2}.
\]

\end{proposition}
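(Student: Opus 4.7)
The plan is to decompose the count $\mathrm{pn}(G)$ by grouping subpaths according to the pair of their endpoints. Write $T_{1},\dots,T_{g}$ for the components obtained after deleting the edges of $C$, with $|V(T_{i})|=n_{i}$, and let $c_{i}$ be the unique vertex of $T_{i}$ that lies on $C$. Then $\mathrm{pn}(G)$ splits into three contributions: trivial (length $0$) paths, contributing $n$; paths whose two endpoints lie in the same component $T_{i}$; and paths whose endpoints lie in distinct components $T_{i},T_{j}$.

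Next I would establish the key structural claim for unicyclic graphs: any path in $G$ either avoids all edges of $C$ entirely, in which case it lives inside one tree $T_{i}$, or it uses a nontrivial arc of $C$. The reason is that if the path enters and later leaves the cycle, it must do so at two distinct cycle vertices, because revisiting a single $c_{i}$ would repeat a vertex. Consequently, if both endpoints $u,v$ lie in the same tree $T_{i}$, then any walk between them using cycle edges would need to enter and exit $C$ at $c_{i}$, creating a repetition; hence there is a \emph{unique} $uv$-path, namely the tree-path in $T_{i}$. This contributes $\sum_{i=1}^{g}\binom{n_{i}}{2}$.

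For the cross-component case, with $u\in V(T_{i})$ and $v\in V(T_{j})$, $i\ne j$, any $uv$-path in $G$ must consist of the unique tree-path from $u$ to $c_{i}$ in $T_{i}$, then an arc of $C$ from $c_{i}$ to $c_{j}$, and finally the unique tree-path from $c_{j}$ to $v$ in $T_{j}$. Since $C$ has exactly two arcs joining $c_{i}$ and $c_{j}$ (and both yield legitimate paths because the tree portions are vertex-disjoint from $C\setminus\{c_{i},c_{j}\}$), there are exactly $2$ paths for each such pair. The number of such pairs is $\binom{n}{2}-\sum_{i=1}^{g}\binom{n_{i}}{2}$, so this case contributes twice that.

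Adding the three pieces gives
\[
\mathrm{pn}(G)=n+\sum_{i=1}^{g}\binom{n_{i}}{2}+2\Bigl(\binom{n}{2}-\sum_{i=1}^{g}\binom{n_{i}}{2}\Bigr)=n+2\binom{n}{2}-\sum_{i=1}^{g}\binom{n_{i}}{2},
\]
which is the claimed formula. The only genuine subtlety is the structural observation that a path with both endpoints in the same $T_{i}$ cannot use any cycle edge; once that is in place the enumeration is routine, so I would take care to state and justify it cleanly rather than leaving it implicit.
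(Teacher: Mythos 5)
Your proof is correct and complete: the decomposition by endpoint pairs (one path for a pair inside a single component $T_i$, exactly two paths for a pair in distinct components, plus the $n$ trivial paths) yields the stated formula, and you rightly isolate and justify the one nontrivial structural point, namely that a path with both ends in the same $T_i$ cannot use a cycle edge since it would have to pass through the cut vertex $c_i$ twice. The paper itself gives no proof of this proposition --- it is quoted from the earlier work \cite{Knor2025} --- so there is nothing to compare against here; your argument is the natural one and there is no gap.
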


This result immediately allows us to determine the subpath number for cycles
on $n$ vertices, as well as the extremal unicyclic graphs.

\begin{corollary}
\label{Cor_cycle} For a cycle on $n$ vertices, we have ${\mathrm{pn}}(C_{n}) =
n^{2}$.
\end{corollary}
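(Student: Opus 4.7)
The plan is to apply Proposition \ref{Prop_unicyclic} directly to the cycle $C_n$, treating it as the degenerate unicyclic graph whose cycle uses every vertex. First I would observe that in $C_n$ the cycle length equals $g=n$, and that deleting all edges of this cycle leaves $n$ isolated vertices. Consequently the component sizes are $n_1 = n_2 = \dots = n_n = 1$, so each term $\binom{n_i}{2}$ vanishes.

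Substituting these values into the formula of Proposition \ref{Prop_unicyclic} gives
\[
{\mathrm{pn}}(C_n) \;=\; n + 2\binom{n}{2} - \sum_{i=1}^{n}\binom{1}{2} \;=\; n + n(n-1) \;=\; n^2,
\]
which is the claimed identity. There is no genuine obstacle here: the entire argument is a one-line specialization of the preceding proposition, with the only thing to verify being the trivial fact that the connected components of $C_n$ minus its edges are singletons.
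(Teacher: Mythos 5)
Your proposal is correct and matches the paper's intended argument: the corollary is presented as an immediate consequence of Proposition~\ref{Prop_unicyclic} with $g=n$ and all component sizes $n_i=1$, exactly as you compute. Nothing further is needed.
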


\begin{corollary}
\label{Cor_unicyclic} Among unicyclic graphs with $n$ vertices, ${\mathrm{pn}%
}(G)$ attains its maximum value if and only if $G = C_{n}$, and its minimum
value if and only if the only cycle in $G$ is a triangle with two of its
vertices having degree two.
\end{corollary}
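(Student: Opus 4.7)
The plan is to apply Proposition~\ref{Prop_unicyclic} directly, which rewrites
\[
\mathrm{pn}(G)\;=\;n+2\binom{n}{2}-\sum_{i=1}^{g}\binom{n_i}{2},
\]
where $n_1,\dots,n_g$ are positive integers summing to $n$ and $g\ge 3$ is the length of the unique cycle of $G$. Thus maximizing $\mathrm{pn}(G)$ is equivalent to minimizing $S:=\sum_{i=1}^g \binom{n_i}{2}$, and minimizing $\mathrm{pn}(G)$ is equivalent to maximizing $S$, subject to the constraints above. The argument splits naturally into the two extremal directions.

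For the maximum, I would note that $\binom{n_i}{2}\ge 0$ with equality iff $n_i=1$, so $S=0$ is attainable only when every $n_i=1$; this forces $g=n$, i.e., $G=C_n$, and gives $\mathrm{pn}(C_n)=n^2$, consistent with Corollary~\ref{Cor_cycle}. Any other unicyclic graph has $g<n$ and so some $n_i\ge 2$, whence $S\ge 1$ and $\mathrm{pn}(G)\le n^2-1$, which settles uniqueness of the maximum.

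For the minimum, I would first fix $g$ and show by a simple swap that $S$ is maximized when vertices are concentrated in one component. Concretely, if $n_i,n_j\ge 2$ then replacing them by $n_i+1$ and $n_j-1$ strictly increases $S$ by $n_i-n_j+1\ge 1$, so at the maximum at most one of the $n_i$ exceeds $1$; the unique maximizer for that $g$ is therefore the composition $(n-g+1,1,\dots,1)$, with value $\binom{n-g+1}{2}$. Since $\binom{n-g+1}{2}$ is strictly decreasing in $g$ on the range $g\ge 3$, the overall maximum of $S$ is attained uniquely at $g=3$, yielding $S=\binom{n-2}{2}$. In graph-theoretic terms, this means the unique cycle is a triangle and exactly two of its three vertices lie in singleton components after deletion of the cycle edges, equivalently, those two vertices have degree $2$ in $G$. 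The only mildly delicate point is making the uniqueness statements precise, but the strict inequalities both in the swap step and in the comparison $\binom{n-g+1}{2}>\binom{n-g}{2}$ handle this cleanly, so I do not anticipate any serious obstacle.
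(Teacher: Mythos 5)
Your proposal is correct and takes essentially the same route as the paper, which states this corollary as an immediate consequence of Proposition~\ref{Prop_unicyclic}; you simply carry out the optimization of $\sum_i \binom{n_i}{2}$ over compositions of $n$ explicitly. The only nit is that your swap step needs the (WLOG) assumption $n_i \ge n_j$ for the increment $n_i - n_j + 1$ to be positive, but this is harmless.
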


Next, we examine how the subpath number behaves when an edge is removed. The
following lemma, established in \cite{Knor2025}, formalizes this observation.

\begin{lemma}
\label{Lemma_aditivity} Let $G$ be a connected graph on $n$ vertices, and let
$e$ be an edge of $G$. Let $G^{\prime}$ be the graph obtained from $G$ by
removing the edge $e$. Then,
\[
{\mathrm{pn}}(G^{\prime}) < {\mathrm{pn}}(G).
\]

\end{lemma}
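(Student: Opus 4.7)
The plan is to prove the strict inequality by a direct injection-style argument: exhibit that every path of $G'$ is also a path of $G$, and then find at least one path in $G$ that is destroyed by the removal of $e$. Since $G' = G - e$ is a spanning subgraph of $G$, any sequence of distinct vertices that forms a path in $G'$ uses only edges of $G'$, hence only edges of $G$, so it is automatically a path of $G$. This gives the weak inequality $\mathrm{pn}(G') \le \mathrm{pn}(G)$ without any case analysis.

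For strict inequality, I would point to a single concrete path that is present in $G$ but absent in $G'$. The natural candidate is the edge $e = uv$ itself, viewed as a path of length $1$ on the two vertices $u$ and $v$. This is a legitimate subpath of $G$, but in $G'$ there is no longer any edge between $u$ and $v$, so this length-$1$ path is no longer a subpath there. (Note that the fact that other, longer paths using $e$ are also destroyed is irrelevant; a single missing path suffices.) Combining this with the monotonicity step yields $\mathrm{pn}(G) \ge \mathrm{pn}(G') + 1$, which is exactly the strict inequality claimed.

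There is essentially no obstacle here, and the argument does not even require $G'$ to remain connected; the hypothesis that $G$ is connected is only used implicitly to ensure that $e$ actually exists as an edge of $G$. If one wished to make the bound quantitative, one could further count how many paths in $G$ genuinely pass through $e$ (as an internal or terminal edge), but this refinement is not needed for the stated lemma.
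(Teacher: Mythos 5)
Your argument is correct and complete: since $G'=G-e$ is a spanning subgraph of $G$, every subpath of $G'$ is a subpath of $G$, and the length-one path $uv$ witnesses strictness. The paper itself only cites this lemma from \cite{Knor2025} without reproducing a proof, but your monotonicity-plus-one-missing-path argument is the natural one, and your observation that connectedness of $G$ (and of $G'$) plays no real role is accurate.
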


This result immediately implies the characterization of extremal graphs with
respect to the subpath number among all connected graphs with $n$ vertices, as
stated in the following theorem from \cite{Knor2025}.

\begin{theorem}
Let $G$ be a connected graph on $n$ vertices. Then
\[
\binom{n}{2} \leq{\mathrm{pn}}(G) \leq\frac{n!}{2} \sum_{i=0}^{n-1} \frac
{1}{i!} + \frac{n}{2},
\]
where the lower bound is attained if and only if $G$ is a tree, and the upper
bound if and only if $G = K_{n}$.
\end{theorem}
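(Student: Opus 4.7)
The plan is to deduce both bounds directly from Lemma~\ref{Lemma_aditivity}, which asserts that removing an edge from a connected graph strictly decreases the subpath number. Since that lemma relates any graph to nearby graphs obtained by a single edge modification, the strategy is to interpolate between an arbitrary connected $G$ and the two extremal graphs (a spanning tree below, and $K_n$ above) through a sequence of edge removals/additions, each contributing a strict inequality.

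For the lower bound, I would fix a spanning tree $T$ of $G$ and consider the graphs $G=G_0 \supsetneq G_1 \supsetneq \cdots \supsetneq G_m = T$, where $G_{i+1}$ is obtained from $G_i$ by deleting one non-tree edge. The key point is that every $G_i$ remains connected, since it still contains the spanning tree $T$, so Lemma~\ref{Lemma_aditivity} applies at each step. Iterating gives $\mathrm{pn}(G) \geq \mathrm{pn}(T)$ with equality if and only if $m=0$, i.e.\ $G=T$ is a tree. By Observation~\ref{Obs_tree} this attained value is $\binom{n+1}{2}$, matching the claimed lower bound.

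For the upper bound I reverse the argument: if $G \neq K_n$, apply Lemma~\ref{Lemma_aditivity} in reverse to the sequence $G = G_0 \subsetneq G_1 \subsetneq \cdots \subsetneq G_r = K_n$, adding one missing edge at a time. Each intermediate graph is connected (it already contains $G$), so each step strictly increases $\mathrm{pn}$. Hence $\mathrm{pn}(G) \leq \mathrm{pn}(K_n)$ with equality only when $G=K_n$. It then remains to evaluate $\mathrm{pn}(K_n)$ in closed form. Since every sequence of distinct vertices of $K_n$ is a path, the number of paths of positive length $k$ equals $\tfrac{1}{2}\cdot n(n-1)\cdots(n-k) = \tfrac{n!}{2(n-k-1)!}$ (dividing by $2$ for path reversal), and there are $n$ trivial paths. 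Summing over $k=1,\dots,n-1$ and re-indexing by $i=n-k-1$ yields
\[
\mathrm{pn}(K_n) \;=\; n \;+\; \frac{n!}{2}\sum_{i=0}^{n-2}\frac{1}{i!} \;=\; \frac{n!}{2}\sum_{i=0}^{n-1}\frac{1}{i!} + \frac{n}{2},
\]
where the last equality absorbs the extra $n/2$ coming from the $i=n-1$ term.

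Neither half of the argument is substantial beyond invoking Lemma~\ref{Lemma_aditivity}; the only mildly non-trivial issue to watch is connectedness of intermediate graphs during the edge removals, which is handled by removing non-tree edges only. I expect the closed-form evaluation of $\mathrm{pn}(K_n)$ to be the most error-prone part, essentially an index-shifting exercise to bring the sum into the form stated in the theorem.
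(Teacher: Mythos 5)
Your proof is correct and is exactly the argument the paper intends: the theorem is presented as an immediate consequence of Lemma~\ref{Lemma_aditivity} (with the proof itself deferred to \cite{Knor2025}), and your monotone interpolation down to a spanning tree and up to $K_n$, together with the direct count of paths in $K_n$, is precisely that argument. One caveat: your lower-bound computation correctly yields $\binom{n+1}{2}$ via Observation~\ref{Obs_tree}, which does not literally ``match'' the stated bound $\binom{n}{2}$ as you claim --- the theorem's lower bound evidently omits the $n$ trivial paths (whereas its upper bound includes them), so what you have actually proved is the slightly stronger, internally consistent version of the statement.
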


\section{Maximal cacti with respect to the subpath number}

Denote by $\mathcal{C}_{n,k}$ the class of all cactus graphs on $n$ vertices
with $k$ cycles. In this section we will characterize cactus graphs from
$\mathcal{C}_{n,k}$ with the maximum value of the subpath number. In the next
section we will do the same for the minimum value of the subpath number. Once
we do that, we can compare the extremal cacti for subpath number with the
extremal cacti for the Wiener index and the number of subtrees. It turns out
that the subpath number is not correlated with either of these quantities.

Let us first define two notions we use in this subsection. A \emph{bridge} in
a graph $G$ is any edge $e$ of $G$ such that $G-e$ has more connected
components than $G.$ A graph is \emph{bridgeless} if it has no bridges. Notice
that a bridgeless graph does not contain leaves. In order to characterize
cactus graphs from $\mathcal{C}_{n,k}$ with the maximum value of the subpath
number, we will use three cactus transformations. In the first transformation
we will address bridges of a cactus graph, in the second the incidence
structure of the cycles, and in the third the size of the cycles.

\begin{lemma}
\label{Lemma_max1}If a cactus graph $G\in\mathcal{C}_{n,k}$ contains a bridge,
then there exists a bridgeless cactus graph $G^{\prime}\in\mathcal{C}_{n,k}$
such that $\mathrm{pn}(G^{\prime})>\mathrm{pn}(G)$.
\end{lemma}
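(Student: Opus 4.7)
The plan is to prove the lemma by induction on the number of bridges of $G$, at each step exhibiting a local transformation that reduces the bridge count by exactly one while strictly increasing $\mathrm{pn}$ and preserving $n$ and $k$.

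For the inductive step, I identify a bridge $uv$ of $G$ and a cycle $C$ of $G$ sharing a common vertex $u$. Such a pair exists in any cactus that has both bridges and cycles: in the block-cut tree of $G$, take a shortest path from any bridge block to any cycle block, and observe that the last bridge block on this path shares a cut vertex with the cycle block it reaches. Write $C=w_1 w_2 \cdots w_g w_1$ with $w_1=u$, and define $G''$ by deleting the edge $w_1 w_2$ of $C$ and adding the edge $v w_2$. Because $uv$ is a bridge, $v\notin V(C)$ and $v$ is not adjacent to $w_2$ in $G$ (otherwise a triangle through the bridge $uv$ would arise), so $G''$ is a well-defined simple graph. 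The only structural change is that the blocks $uv$ and $C$ of $G$ merge into a single cycle block $C''=w_1 v w_2 \cdots w_g$ of length $g+1$; all remaining blocks are untouched, so $G''\in\mathcal{C}_{n,k}$ and $G''$ has exactly one fewer bridge than $G$.

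The crucial step is showing $\mathrm{pn}(G'')>\mathrm{pn}(G)$. For this I use the identity, valid for any cactus $H$,
\[
\mathrm{pn}(H)=|V(H)|+\sum_{\{a,b\}}2^{c_H(a,b)},
\]
where the sum is over unordered pairs of distinct vertices and $c_H(a,b)$ denotes the number of cycle blocks on the block-cut-tree path from $a$ to $b$; it holds because at each such cycle an $a$-$b$ path independently selects one of two traversal directions. To compare $G$ and $G''$, I examine each pair $\{a,b\}$ according to whether the block-cut-tree path in $G$ uses the bridge $uv$, the cycle $C$, both, or neither. In the three cases where either neither is used, or only $C$ is used (then the path in $G''$ traverses $C''$ instead), or both are used (then the path in $G''$ passes through $C''$ once in place of the pair), the value of $c$ is unchanged. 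In the remaining case where $uv$ is used but $C$ is not (for example the pair $\{u,v\}$ itself), the bridge contributed $0$ in $G$ but is absorbed into $C''$ in $G''$, contributing $1$, so $c_{G''}(a,b)=c_G(a,b)+1$. Hence every term weakly increases and at least one strictly increases, giving $\mathrm{pn}(G'')>\mathrm{pn}(G)$.

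The main obstacle is the case analysis in the previous paragraph, particularly when $C$ carries additional cut vertices $w_i$ to which further cactus components are attached; care is needed to verify that for pairs $\{a,b\}$ straddling such components the contributions of the blocks other than $uv$ and $C$ to $c$ are identical in $G$ and $G''$. Once this is handled, applying the induction hypothesis to $G''$, which has one fewer bridge and strictly larger $\mathrm{pn}$, produces a bridgeless $G'\in\mathcal{C}_{n,k}$ with $\mathrm{pn}(G')>\mathrm{pn}(G)$, completing the proof.
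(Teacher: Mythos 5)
Your proposal is correct, and it uses exactly the same graph transformation as the paper: take a bridge $uv$ with one endpoint on a cycle $C$, detach one cycle edge at that endpoint and reattach it to the other end of the bridge, thereby absorbing the bridge into a cycle of length one greater while preserving $n$, $k$, and the cactus property. (The paper names the cycle endpoint $v$ and you name it $u$, but the move is identical, and your block-cut-tree argument for the existence of such a bridge--cycle pair is a careful version of what the paper merely asserts.) Where you differ is in how the inequality $\mathrm{pn}(G'')>\mathrm{pn}(G)$ is verified. The paper constructs an explicit injection from the path set of $G$ into that of $G'$, splitting paths into three classes according to whether they use $uv$ and/or the detached cycle edge, and then observes the injection misses the second $u$--$v$ path. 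You instead invoke the cactus identity $\mathrm{pn}(H)=|V(H)|+\sum_{\{a,b\}}2^{c_H(a,b)}$ and check pair by pair that $c$ never decreases and strictly increases for $\{u,v\}$. Your route is arguably cleaner and more quantitative (it shows the gain is exactly $\sum 2^{c}$ over the pairs whose route uses the bridge but not $C$), and it is consistent with the counting machinery the paper itself deploys in its Lemmas on cactus chains; the paper's injection argument, by contrast, avoids having to set up the $2^{c}$ formula this early. Both are complete proofs; your case analysis correctly accounts for components hanging off the other cycle vertices, which is the one place such an argument could go wrong.
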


\begin{proof}
Let $e=uv$ be a bridge of $G$ such that one of its end-vertices belongs to a
cycle $C,$ say $v\in V(C).$ If $G$ has bridges, such an edge $e$ must exist.
Let $w$ be the neighbor of $v$ on $C,$ and let $G^{\prime}$ be the graph
obtained from $G$ by removing the edge $vw$ and adding the edge $uw$ instead.
Obviously, $G^{\prime}\in\mathcal{C}_{n,k}$ and $G^{\prime}$ has one bridge
less than $G.$

Let us prove that $\mathrm{pn}(G^{\prime})>\mathrm{pn}(G)$. To see this,
denote by $\mathcal{P}$ (resp. $\mathcal{P}^{\prime}$) the set of all paths in
$G$ (resp. $G^{\prime}$). We partition the set $\mathcal{P}$ into three parts,
$\mathcal{P}_{1}$ contains all paths $P$ of $\mathcal{P}$ such that $uv\in
E(P)$ and $vw\in E(P),$ $\mathcal{P}_{2}$ all paths $P$ of $\mathcal{P}$ with
$uv\not \in E(P)$ and $vw\in E(P),$ and $\mathcal{P}_{3}$ all the remaining
paths of $\mathcal{P}$. Next, a function $f:\mathcal{P}\rightarrow
\mathcal{P}^{\prime}$ is defined as follows:

\begin{itemize}
\item If $P\in\mathcal{P}_{1}$ then $f(P)\in\mathcal{P}^{\prime}$ is a path
obtained from $P$ by replacing subpath $uvw$ by the edge $uw.$ Notice that
$f(P)$ contains $uw,$ but not $uv.$

\item If $P\in\mathcal{P}_{2}$ then $f(P)\in\mathcal{P}^{\prime}$ is a path
obtained from $P$ by replacing the edge $vw$ by the subpath $vuw.$ Notice that
$f(P)$ contains both $uw$ and $uv.$

\item If $P\in\mathcal{P}_{3}$, then $f(P)=P.$ Notice that $f(P)$ does not
contain $uw.$
\end{itemize}

Let us show that $f$ is an injection. For that purpose, let $P_{1}$ and
$P_{2}$ be two distinct paths in $\mathcal{\mathcal{P}}$. If $P_{1}%
\in\mathcal{P}_{i}$ and $P_{2}\in\mathcal{P}_{j}$ for $i<j$, then $f(P_{1})$
and $f(P_{2})$ differ either in the edge $uw$ or in the edge $uv$. On the
other hand, if $P_{1}$ and $P_{2}$ belong to the same $\mathcal{P}_{i}$, then
they differ in the part of the path not changed by the function $f,$ so
$f(P_{1})$ and $f(P_{2})$ are distinct in $G^{\prime}$. Hence, in all cases we
have established $f(P_{1})\not =f(P_{2}),$ so $f$ is an injection.

Let us next show that $f$ is not a surjection. To see this, notice that $uv$
is the only path connecting $u$ and $v$ in $G,$ and $f(u,v)=uv.$ On the other
hand, there are two paths connecting $u$ and $v$ in $G^{\prime}.$ Since the
function $f$ preserves the end-vertices of a path, this implies that $f$ is
not a surjection.

We have established a function $f$ between $\mathcal{P}$ and $\mathcal{P}%
^{\prime}$ which is injection, but not surjection, so we can conclude that
$\left\vert \mathcal{P}\right\vert <\left\vert \mathcal{P}^{\prime}\right\vert
$ which means $\mathrm{pn}(G)<\mathrm{pn}(G^{\prime})$. Applying repeatedly
this transformation until we obtain a graph without bridges yields the claim
of the lemma.
\end{proof}

The above lemma confirms that, in the case of unicyclic graphs, the subpath
number attains the maximum value only for the cycle $C_{n},$ which is already
established in Corollary \ref{Cor_unicyclic}. In what follows, we will deal
with the cacti with at least two cycles.

A vertex $v$ of a bridgeless cactus graph $G$ is an \emph{intersection vertex}
if $v$ belongs to at least two cycles of $G$. Let $\mathcal{V}$ be the set of
all intersection vertices of $G$ and let $\mathcal{C}$ be the set of all
cycles of $G$. A \emph{cycle-incidence graph} $T_{G}$ of a bridgeless cactus
graph $G$ is defined as the graph on the set of vertices $\mathcal{V\cup
\mathcal{C}}$ such that an intersection vertex $v\in\mathcal{V}$ and a cycle
$C\in\mathcal{C}$ are connected by an edge in $T_{G}$ if $v$ belongs to the
cycle $C$ in $G.$ Notice that $T_{G}$ is a tree and every leaf in $T_{G}$ is a
vertex of $\mathcal{C}$. A \emph{cactus chain} is a bridgeless cactus graph
$G$ such that $T_{G}$ is a path.

\begin{lemma}
\label{Lemma_max2}Let $G\in\mathcal{C}_{n,k}$ be a bridgeless cactus graph
with $k\geq2$. If $G$ is not a cactus chain, then there exists a cactus chain
$G^{\prime}\in\mathcal{C}_{n,k}$ such that $\mathrm{pn}(G^{\prime
})>\mathrm{pn}(G).$
\end{lemma}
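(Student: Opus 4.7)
The approach is to define a local transformation $G \mapsto G''$ on bridgeless cacti in $\mathcal{C}_{n,k}$ that strictly decreases the branching excess $\beta(T_G) := \sum_{x \in V(T_G)} \max(\deg_{T_G}(x) - 2, 0)$ of the cycle-incidence tree while strictly increasing $\mathrm{pn}$. Since $\beta(T_G) = 0$ if and only if $T_G$ is a path, iterating the transformation finitely many times produces a cactus chain $G' \in \mathcal{C}_{n,k}$ with $\mathrm{pn}(G') > \mathrm{pn}(G)$, which proves the lemma.

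\textbf{The transformation.} Because $T_G$ is not a path, some vertex of $T_G$ has degree at least $3$. Fix a longest path $\pi$ in $T_G$; its endpoints are cycles (leaves of $T_G$). Choose an intersection vertex $v$ on $\pi$ of $T_G$-degree at least $3$ together with a pendant cycle $C_0$ attached to $v$ in $T_G$ and lying off $\pi$; such a pair can be found by descending from any branching vertex of $\pi$ into an off-$\pi$ subtree until a leaf of $T_G$ is reached. Let $C^*$ denote the end-cycle of $\pi$ farthest from $v$ and pick a non-intersection vertex $u \in V(C^*)$, which exists since $C^*$ is a leaf of $T_G$ (so it contains exactly one intersection vertex) and has at least three vertices. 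Form $G''$ from $G$ by deleting the two edges of $C_0$ incident to $v$ (so that $V(C_0) \setminus \{v\}$ becomes a path) and reattaching this path at $u$ via two new edges. Then $G'' \in \mathcal{C}_{n,k}$ is bridgeless and $\beta(T_{G''}) = \beta(T_G) - 1$.

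\textbf{Reducing to a single inequality.} Let $H := G - (V(C_0) \setminus \{v\})$, the bridgeless subcactus common to $G$ and $G''$, and for each $w \in V(H)$ put $f_H(w) := \sum_{b \in V(H)} p_H(w, b)$, where $p_H(w, b)$ is the number of paths from $w$ to $b$ in $H$ (with $p_H(w, w) := 1$). For $w \in V(H)$ and $b \in V(C_0) \setminus \{v\}$, the block structure of $G$ forces $p_G(w, b) = 2 \, p_H(w, v)$, and the analogous factorization $p_{G''}(w, b) = 2 \, p_H(w, u)$ holds. Combining this with the identity $\mathrm{pn}(G) = \tfrac{1}{2}\bigl(\sum_{w \in V(G)} f_G(w) + n\bigr)$ and a straightforward partition of the double sum $\sum_{(w,b) \in V(G)^2} p_G(w,b)$ according to whether $w, b$ lie in $V(H)$ or in $V(C_0) \setminus \{v\}$, one obtains the clean formula
\[
\mathrm{pn}(G'') - \mathrm{pn}(G) \;=\; 2 \, \lvert V(C_0) \setminus \{v\} \rvert \, \bigl( f_H(u) - f_H(v) \bigr).
\]
Hence the proof reduces to establishing $f_H(u) > f_H(v)$.

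\textbf{Main obstacle.} The crux is therefore the inequality $f_H(u) > f_H(v)$, where $u$ is a non-intersection vertex in a leaf cycle $C^*$ of $T_H$ and $v$ is an intersection vertex of $H$. Writing $H$ as the union of $C^*$ with $H' := H - (V(C^*) \setminus \{x\})$, where $x$ is the intersection vertex of $C^*$ in $T_H$, one may express
\[
f_H(u) = 2|V(C^*)| - 3 + 2 f_{H'}(x), \qquad f_H(v) = f_{H'}(v) + 2\bigl(|V(C^*)|-1\bigr) p_{H'}(v,x),
\]
so the inequality reduces to an estimate involving $f_{H'}(x)$, $f_{H'}(v)$ and $p_{H'}(v, x)$ in the smaller cactus $H'$. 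This estimate is the main technical step and I would prove it by induction on the number of cycles of $H$: intuitively, block paths from the peripheral vertex $u$ traverse more cycles and accumulate larger powers of $2$ than those from the more central $v$, although for pairs $(u, b)$ with $b \in V(C^*)$ the local count is smaller and must be compensated by the gain over faraway vertices. Once $f_H(u) > f_H(v)$ is established, iterating the transformation drives $\beta$ to zero and yields the required cactus chain.
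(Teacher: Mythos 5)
Your reduction formula $\mathrm{pn}(G'')-\mathrm{pn}(G)=2\,|V(C_0)\setminus\{v\}|\,(f_H(u)-f_H(v))$ is correct and clean, but the inequality $f_H(u)>f_H(v)$ on which everything hinges --- and which you explicitly leave as ``the main technical step'' --- is false for the vertices your recipe selects. Concretely, let $G$ consist of three triangles $C_0=s_1pq$, $C_1=s_1a_1a_2$, $C_2=s_1cs_2$ sharing the vertex $s_1$, together with a $10$-cycle $C_3$ attached at $s_2$ (so $G\in\mathcal{C}_{16,4}$). The leaves of $T_G$ are $C_0,C_1,C_3$; every longest path of $T_G$ runs from $C_3$ to one of $C_0,C_1$, say $\pi=C_1-s_1-C_2-s_2-C_3$; the pendant cycle off $\pi$ is $C_0$, attached at the branching vertex $v=s_1$; and the end-cycle of $\pi$ farthest from $v$ is $C^*=C_3$, so $u$ is a degree-two vertex of the $10$-cycle. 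In $H=G-\{p,q\}$ one computes $f_H(s_1)=1+4\cdot 2+9\cdot 4=45$ while $f_H(u)=1+9\cdot 2+2\cdot 4+2\cdot 8=43$, so by your own formula the transformation strictly \emph{decreases} the subpath number (by $8$). The failure mechanism is that the $|V(C^*)|-1$ vertices of a large end-cycle are path-close to $u$ but path-far from $v$, and this loss can outweigh the gain over the rest of the graph; no induction will rescue the estimate in this form. A secondary gap: a pendant leaf cycle attached directly to a degree-$\ge 3$ vertex of $\pi$ need not exist at all (the off-$\pi$ branch may itself be a chain of several cycles), and if $C_0$ is attached deeper in the branch your move no longer decreases the branching excess $\beta$.

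The paper's transformation relocates in the opposite direction, and this is essential: instead of carrying the small pendant piece to the far end of the longest path, it detaches the large component $T_k$ at the branching vertex $t$ and reattaches it at a degree-two vertex $z$ of the leaf cycle of the \emph{smallest} thread $T_1$, so that $T_1$ is absorbed into the trunk. The potentially negative contributions $\Delta(x,y)$ then occur only for $y\in V(T_1)$, and they are controlled by pairing $V(T_1)$ injectively into $V(T_2)$ (possible since $|V(T_1)|\le |V(T_2)|$) and verifying $\Delta(x,y)+\Delta(x,f(y))>0$ termwise. If you wish to keep your identity in terms of $f_H$, you must orient the cut the same way --- move the attachment point of the large side onto the tip of the smallest branch --- and the comparison of $f_H$ at the two candidate attachment points then genuinely requires an argument of the paired type, not the peripherality heuristic you invoke.
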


\begin{proof}
Let $t$ be a vertex of $T_{G}$ of degree at least three. Since $T_{G}$ is not
a path, such a vertex $t$ must exist. A component of $T_{G}-t$ which does not
contain a vertex of degree $\geq3$ is called a \emph{thread}. Denote by
$T_{1},T_{2},\ldots,T_{k}$ all the components of $T_{G}-t.$ We may assume that
$t$ is chosen so that $T_{1},\ldots,T_{k-1}$ are all threads. Since $k\geq3,$
this implies $T_{1}$ and $T_{2}$ are both threads. Let $V(T_{i})$ denote the
set of vertices of $G$ contained in cycles whose corresponding vertices of
$T_{G}$ belong to $T_{i}.$ Since $T_{1}$ and $T_{2}$ are both threads, we may
assume $\left\vert V(T_{1})\right\vert \leq\left\vert V(T_{2})\right\vert .$

We now introduce a graph transformation of $G$ into $G^{\prime}$ which, as we
will establish, increases the subpath number. If $t\in\mathcal{V}$, then set
$u=t$ and denote by $C$ the cycle of $T_{k}$ containing $u$. On the other hand
if $t\in\mathcal{C}$, then denote by $u$ a vertex of $T_{k}$ incident with $t$
and denote by $C$ the cycle of $T_{k}$ containing $u$.
Let $v$ and $w$ be the neighbors of $u$ on $C$ in $G.$ Further, let $z$ be a
vertex of degree $2$ on the cycle in $G$ which corresponds to the leaf of
$T_{1}$ in $T_{G}.$ Graph $G^{\prime}$ is defined as the graph obtained from
$G$ by removing edges $vu$ and $wu,$ and adding edges $vz$ and $wz$ instead.
Notice that $G^{\prime}$ belongs to $\mathcal{C}_{n,k}.$

We show that $\mathrm{pn}(G)<\mathrm{pn}(G^{\prime}).$ For a pair of vertices
$x,y\in V(G)$, denote by $\mathrm{pn}_{G}(x,y)$ the number of paths in $G$
which connect vertices $x$ and $y.$ Denote further $\Delta(x,y)=\mathrm{pn}%
_{G^{\prime}}(x,y)-\mathrm{pn}_{G}(x,y),$ and notice that
\[
\mathrm{pn}(G^{\prime})-\mathrm{pn}(G)=\sum_{x,y\in V(G)}\Delta(x,y).
\]
Observe that $\Delta(x,y)$ may be negative only if $x$ belongs to a cycle of
$T_{k}$ and $y$ belongs to a cycle of $T_{1}$. Since $\left\vert
V(T_{1})\right\vert \leq\left\vert V(T_{2})\right\vert $, there exists an
injection $f:V(T_{1})\rightarrow V(T_{2})$. It is sufficient to prove that
$\Delta(x,y)+\Delta(x,f(y))>0,$ for any $x\in V(T_{k})$ and $y\in V(T_{1})$.

For a pair of vertices $x,y\in V(G)$, we define the corresponding path
$P_{x,y}$ in $T_{G}$. Recall that every cycle of $G$ has a corresponding
vertex in $T_{G}$. Hence, we will refer by "cycle" to the cycle of $G$ as well
as to the corresponding vertex of $T_{G}$. Assume that $x$ belongs to a cycle
$C_{x}$ of $G$ and $y$ belongs to $C_{y}$. If neither $x$ nor $y$ are
intersection vertices in $G$, then $P_{x,y}$ is the path of $T_{G}$ which
connects the cycles $C_{x}$ and $C_{y}$. If $x$ (resp. $y$) is an intersection
vertex of $G$, then $P_{x,y}$ starts with $x$ (resp. ends with $y$) in $T_{G}%
$. In other words, if $x$ (resp. $y$) belongs to more than one cycle of $G,$
then $P_{x,y}$ in $T_{G}$ may contain only one cycle of $G$ to which $x$
(resp. $y$) belongs. Next, for a pair of vertices $x,y\in V(G),$ the number
$c(x,y)$ is defined as the number of cycles on $P_{x,y}$.

Let us proceed with proving that $\Delta(x,y)+\Delta(x,f(y))>0,$ for any $x\in
V(T_{k})$ and $y\in V(T_{1}).$ Denote by $c$ the number of cycles in $T_{1}.$
If $t$ is an intersection vertex, we have
\begin{align*}
\Delta(x,y)+\Delta(x,f(y))  &  \geq2^{c(x,t)}(2-2^{c}+2^{c(t,f(y))+c}%
-2^{c(t,f(y))})\\
&  =2^{c(x,t)}\big((2^{c}-1)(2^{c(t,f(y))}-1)+1\big)>0.
\end{align*}
If $t$ is a cycle, we have
\begin{align*}
\Delta(x,y)+\Delta(x,f(y))  &  \geq2^{c(x,t)}(2-2^{c+1}+2^{c+1+c(t,f(y))}%
-2^{1+c(t,f(y))})\\
&  =2^{c(x,t)}(2^{c}-1)(2^{1+c(t,f(y))}-2)>0,
\end{align*}
since $c\geq1$ and $c(t,f(y))\geq1$. Hence, we have established that
$\mathrm{pn}(G)<\mathrm{pn}(G^{\prime})$. Notice that the sum of the degrees
over all vertices of $T_{G}$ of degree at least $3$ has decreased from $G$ to
$G^{\prime},$ so applying this transformation repeatedly yields a bridgeless
cactus chain $G^{\prime}$.
\end{proof}

\begin{figure}[h]
\begin{center}
\raisebox{-0.9\height}{\includegraphics[scale=0.6]{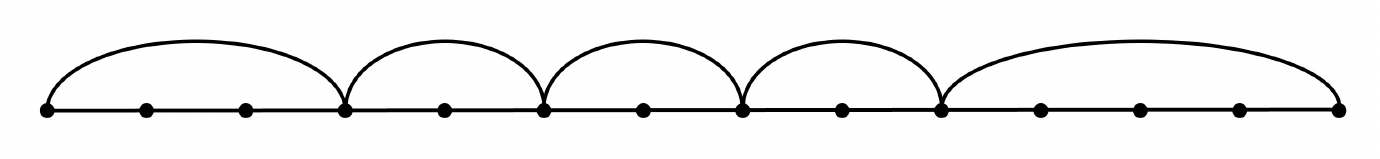}}
\end{center}
\caption{A pseudo triangle chain $\mathrm{PTC}(14,5).$}%
\label{Fig_PTC}%
\end{figure}

A cycle $C$ of a bridgeless cactus graph $G$ is an \emph{end-cycle} if at most
one of its vertices has degree greater than two, otherwise $C$ is an
\emph{interior} cycle. Notice that a cactus chain $G\in\mathcal{C}_{n,k}$ with
$k\geq2$ contains precisely two end-cycles. A \emph{pseudo triangle chain,}
denoted by $\mathrm{PTC}(n,k),$ is a cactus chain from $\mathcal{C}_{n,k}$ in
which every interior cycle is a triangle and the two end-cycles differ in the
number of vertices by at most one. This notion is illustrated by Figure
\ref{Fig_PTC}.

\begin{lemma}
\label{Lemma_max3}Let $G\in\mathcal{C}_{n,k}$ be a bridgeless cactus chain,
where $k\geq2$. If $G$ is distinct from $\mathrm{PTC}(n,k),$ then
$\mathrm{pn}(\mathrm{PTC}_{n,k})>\mathrm{pn}(G)$.
\end{lemma}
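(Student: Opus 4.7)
The plan is to transform any cactus chain $G\in\mathcal{C}_{n,k}$ distinct from $\mathrm{PTC}(n,k)$ into $\mathrm{PTC}(n,k)$ by a sequence of local moves, each of which strictly increases $\mathrm{pn}$; since $\mathcal{C}_{n,k}$ is finite, the lemma will follow. Two moves suffice: \emph{interior reduction}, applicable when some interior cycle $C_i$ (with $1<i<k$) has length at least $4$, picks a degree-$2$ vertex $z\in V(C_i)$, removes it (joining its former neighbors by an edge so that $C_i$ shrinks to length $\ell_i-1$), and re-inserts $z$ by subdividing an edge of one of the two end-cycles $C_j$, $j\in\{1,k\}$; \emph{end balancing}, applicable when every interior cycle of $G$ is a triangle and $\ell_k\geq\ell_1+2$ (without loss of generality), is the analogue with $i=k$ and $j=1$. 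In either case $G'\in\mathcal{C}_{n,k}$ remains a cactus chain whose cycle-incidence tree is isomorphic to $T_G$, and every vertex other than $z$ retains its cycle memberships.

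The central observation is the identity $\mathrm{pn}_H(x,y)=2^{c(x,y)}$ valid for any cactus $H$ and distinct $x,y\in V(H)$, where $c(x,y)$ is the number of cycles on the $T_H$-path from $x$ to $y$ as introduced in the proof of Lemma~\ref{Lemma_max2}; this holds because each cycle on $P_{x,y}$ can be traversed in exactly two ways. Consequently
\[
\mathrm{pn}(H)=n+\sum_{\{x,y\},\,x\neq y}2^{c(x,y)},
\]
and under either transformation only pairs $\{z,y\}$ can change their $c$-value, so $\mathrm{pn}(G')-\mathrm{pn}(G)=\sum_{y\neq z}\bigl(2^{c_{G'}(z,y)}-2^{c_G(z,y)}\bigr)$. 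The whole problem thus reduces to a single-vertex contribution calculation.

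For end balancing, grouping the sum over $y$ by the cycle containing $y$ (and handling the intersection vertices $v_1,\dots,v_{k-1}$ separately) would yield the closed form
\[
\mathrm{pn}(G')-\mathrm{pn}(G)=(\ell_k-\ell_1-1)(2^k-2),
\]
which is positive by assumption. For interior reduction, denote by $\Delta_1$ and $\Delta_k$ the two possible values of $\mathrm{pn}(G')-\mathrm{pn}(G)$ corresponding to $j=1$ and $j=k$. Writing $p_a$ for the number of pure (degree-$2$) vertices of $C_a$, the analogous expansion would produce
\[
\Delta_1+\Delta_k=\sum_{a=1}^k p_a\bigl(2^a+2^{k-a+1}-2^{|a-i|+2}\bigr)+\bigl(8+2^{k+1}-3\cdot 2^i-3\cdot 2^{k-i+1}\bigr).
\]
A direct case split ($a\leq i$ vs.\ $a\geq i$, using $1<i<k$) shows that each coefficient of $p_a$ is strictly positive, and the inequality $2^i+2^{k-i+1}\leq 2^k$ for $i\in[2,k-1]$ implies that the bracketed constant is at least $2^{k-1}-4\geq 0$. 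Thus $\Delta_1+\Delta_k>0$, so at least one of $\Delta_1,\Delta_k$ is strictly positive and $z$ is inserted into the corresponding end-cycle.

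The main obstacle is the bookkeeping in the interior-reduction step: the values $c(z,v_j)$ depend on whether the intersection vertex $v_j$ lies to the left or right of $z$'s current cycle, so the function giving the contribution $\sum_{y\neq z}2^{c(z,y)}$ as a function of $z$'s cycle position requires careful expansion. Once this is carried out, the identity for end balancing and the formula for $\Delta_1+\Delta_k$ are elementary. Iterating the two moves---first reducing every interior cycle to a triangle via interior reduction, then balancing the end-cycles via end balancing---terminates at $\mathrm{PTC}(n,k)$, proving $\mathrm{pn}(\mathrm{PTC}(n,k))>\mathrm{pn}(G)$.
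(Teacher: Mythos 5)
Your proposal is correct and reaches the same two local moves as the paper (shrink a long interior cycle and graft the freed vertex onto an end-cycle; then transfer vertices from the longer end-cycle to the shorter one), but it justifies positivity by a genuinely different mechanism. The paper never computes the total change: it pairs each vertex $y$ on the ``losing'' side with a vertex $f(y)$ on the larger side via an injection and shows $\Delta(x,y)+\Delta(x,f(y))>0$ termwise, always inserting the freed vertex into the end-cycle lying in the smaller component. You instead exploit the closed form $\mathrm{pn}_H(x,y)=2^{c(x,y)}$ to write the full increment as a single sum over $y\neq z$, and for the interior step you use an averaging trick: compute $\Delta_1+\Delta_k$ for the two candidate insertions and conclude that at least one is positive. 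I checked your two key formulas, $\mathrm{pn}(G')-\mathrm{pn}(G)=(\ell_k-\ell_1-1)(2^k-2)$ for end balancing and the displayed expression for $\Delta_1+\Delta_k$, and both are correct; your approach buys explicit increments (and in particular recovers the exact-cancellation phenomena that the paper only observes as $\Delta(x,y)+\Delta(x,f(y))=0$ in its second step), at the cost of heavier bookkeeping, while the paper's pairing avoids all summation.

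One small repair is needed in your constant-term estimate: the inequality you cite, $2^i+2^{k-i+1}\leq 2^k$, only gives $8+2^{k+1}-3(2^i+2^{k-i+1})\geq 8-2^k$, which is negative for $k\geq 4$. What you actually need is the convexity bound $2^i+2^{k-i+1}\leq 2^2+2^{k-1}$ for $2\leq i\leq k-1$ (the function $i\mapsto 2^i+2^{k-i+1}$ is maximized at the endpoints of that range), which yields the constant being at least $2^{k-1}-4\geq 0$ for $k\geq 3$, exactly as you claim. Since the coefficient of each $p_a$ is strictly positive and $p_1\geq 2$, the strict inequality $\Delta_1+\Delta_k>0$ survives, so the argument goes through once this bound is stated correctly.
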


\begin{proof}
Assume first that there exists an interior cycle $C$ of $G$ which is not a
triangle. Since $G$ is a cactus chain, $C$ contains precisely two intersection
vertices. Hence, there exists a vertex $u$ on $C$ which is not an intersection
vertex. Denote by $v$ and $w$ the two neighbors of $u$ in $G,$ and notice that
both $v$ and $w$ belong to the cycle $C.$ Denote by $G_{1}$ and $G_{2}$ the
two connected components of $G-V(C),$ where we may assume the components are
denoted so that $\left\vert V(G_{1})\right\vert \leq\left\vert V(G_{2}%
)\right\vert .$ Notice that each of the components $G_{1}$ and $G_{2}$ must
contain vertices of precisely one end-cycle of $G.$ Let $a$ be a vertex of the
end-cycle of $G$ which is contained in $G_{1}$ such that $a$ is not an
intersection vertex of $G.$ Denote by $b$ a neighbor of $a.$ Let $G^{\prime}$
be a graph obtained from $G$ by removing edges $uv$, $uw$ and $ab,$ and adding
edges $ua$, $ub$ and $vw$ instead. Notice that $G^{\prime}$ is also a
bridgeless cactus chain on $n$ vertices with $k$ cycles.

We wish to establish that $\mathrm{pn}(G)<\mathrm{pn}(G^{\prime})$. Again,
denote $\Delta(x,y)=\mathrm{pn}_{G^{\prime}}(x,y)-\mathrm{pn}_{G}(x,y)$ and
notice that $\Delta(x,y)$ may be negative only if $x=u$ and $y\in V(G_{1}).$
Since $\left\vert V(G_{1})\right\vert \leq\left\vert V(G_{2})\right\vert ,$
there exists an injection $f:V(G_{1})\rightarrow V(G_{2}).$ Denote by $c$ the
number of cycles in $G_{1}.$ Notice that for each $y\in V(G_{1})$ it holds
that%
\begin{align*}
\Delta(x,y)+\Delta(x,f(y))  &  \geq2-2^{c+1}+2^{c(x,f(y))+c}-2^{c(x,f(y))}\\
&  =(2^{c}-1)(2^{c(x,f(y))}-2)>0,
\end{align*}
since $c\geq1$ and $c(x,f(y))\geq2$. Applying repeatedly this transformation
yields a bridgeless cactus chain $G^{\prime}$ in which all interior cycles are
triangles, such that $\mathrm{pn}(G)<\mathrm{pn}(G^{\prime}).$ Assume that
cycles of $G^{\prime}$ are denoted by $C_{1},\ldots,C_{k}$ such that $C_{1}$
and $C_{k}$ are end-cycles, and the pair of cycles $C_{i}$ and $C_{i+1}$ share
an intersection vertex for every $i=1,\ldots,k-1.$ If the end-cycles $C_{1}$
and $C_{k}$ of $G^{\prime}$ differ in the number of vertices by at most one,
we are done. So, let us assume that $\left\vert V(C_{k})\right\vert
-\left\vert V(C_{1})\right\vert \geq2.$

Let $u$ be a vertex of $C_{k}$ which is not an intersection vertex, and let
$v$ and $w$ be the two neighbors of $u$ in $G^{\prime}.$ Let $a$ be a vertex
of $C_{1}$ which is not an intersection vertex and let $b$ be a neighbor of
$a.$ Denote by $G^{\prime\prime}$ the graph obtained from $G^{\prime}$ by
removing edges $uv$, $uw$ and $ab$ and adding edges $vw,$ $ua$ and $ub$ instead.

We prove that $\mathrm{pn}(G^{\prime})<\mathrm{pn}(G^{\prime\prime})$. Denote
$\Delta(x,y)=\mathrm{pn}_{G^{\prime\prime}}(x,y)-\mathrm{pn}_{G^{\prime}%
}(x,y)$ and notice that $\Delta(x,y)$ may be negative only if $x=u$ and
$y\in\big(V(C_{1})\cup\cdots\cup V(C_{k-1})\big)\setminus V(C_{k})$. Since
$\left\vert V(C_{1})\right\vert <\left\vert V(C_{k})\right\vert $ and every
interior cycle of $G^{\prime}$ is a triangle, there exists an injection%
\[
f:V(C_{1})\cup\cdots\cup V(C_{k-1})\rightarrow V(C_{2})\cup\cdots\cup
V(C_{k})
\]
such that $f(y)\in V(C_{k+1-i})$ if and only if $y\in V(C_{i})$. Observe that
if $y$ is the intersection vertex of $V(C_{i})\cap V(C_{i+1})$ then $f(y)$ is
the intersection vertex of $C_{k+1-i}$ with $C_{k-i}$. Assume that $y\in
V(C_{i})\setminus\big(V(C_{i+1}\cup V(C_{i-1})\big)$ for $1\leq i\leq k-1$.
Then, $\mathrm{pn}_{G^{\prime}}(x,y)=2^{k-(i-1)}$ and $\mathrm{pn}%
_{G^{\prime\prime}}(x,y)=2^{i}$. Also, it holds that $\mathrm{pn}_{G^{\prime}%
}(x,f(y))=2^{k-(k+1-i-1)}$ and $\mathrm{pn}_{G^{\prime\prime}}(x,y)=2^{k+1-i}%
.$ Hence, we have%
\[
\Delta(x,y)+\Delta(x,f(y))=2^{i}-2^{k-(i-1)}+2^{k+1-i}-2^{k-(k+1-i-1)}=0.
\]
Now assume that $y\in V(C_{i})\cap V(C_{i-1})$ for $2\leq i\leq k-1$. Then
$\mathrm{pn}_{G^{\prime}}(x,f(y))=2^{k-(k+1-i)}$ and $\mathrm{pn}%
_{G^{\prime\prime}}(x,f(y))=2^{k+1-i}$. Hence, we have
\[
\Delta(x,y)+\Delta(x,f(y))=2^{i-1}-2^{k-(i-1)}+2^{k+1-i}-2^{k-(k+1-i)}=0.
\]
Also, $\left\vert V(C_{1})\right\vert <\left\vert V(C_{k})\right\vert $
implies that $f$ is not a surjection, so there exists a vertex $z\in V(C_{k})$
which is not in the image of $f.$ For such a vertex $z,$ we have%
\[
\Delta(u,z)=2^{k}-2>0,
\]
since $k>2$. We conclude that $\mathrm{pn}(G^{\prime})<\mathrm{pn}%
(G^{\prime\prime})$. Applying this transformation repeatedly yields the graph
$\mathrm{PTC}(n,k)$, and we are done.
\end{proof}

Lemmas \ref{Lemma_max1}-\ref{Lemma_max3} immediately yield the following result.

\begin{theorem}
\label{Tm_max} The graph $\mathrm{PTC}(n,k)$ uniquely maximizes the subpath
number among all cacti on $n$ vertices with $k\geq2$ cycles.
\end{theorem}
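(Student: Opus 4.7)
The plan is to combine the three lemmas in sequence to obtain a chain of strict inequalities terminating at $\mathrm{PTC}(n,k)$. Starting from an arbitrary cactus $G\in\mathcal{C}_{n,k}$ distinct from $\mathrm{PTC}(n,k)$, I would repeatedly modify $G$ using the transformations supplied by the three preceding lemmas, each of which strictly increases the subpath number while keeping the graph inside $\mathcal{C}_{n,k}$.

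More concretely, first I would apply Lemma \ref{Lemma_max1}: if $G$ contains any bridge, the lemma produces a bridgeless $G_{1}\in\mathcal{C}_{n,k}$ with $\mathrm{pn}(G_{1})>\mathrm{pn}(G)$; otherwise I set $G_{1}=G$. Next, if $G_{1}$ is not a cactus chain, Lemma \ref{Lemma_max2} (which requires $k\geq 2$, as assumed) yields a cactus chain $G_{2}\in\mathcal{C}_{n,k}$ with $\mathrm{pn}(G_{2})>\mathrm{pn}(G_{1})$; otherwise I set $G_{2}=G_{1}$. Finally, if $G_{2}\neq\mathrm{PTC}(n,k)$, Lemma \ref{Lemma_max3} gives $\mathrm{pn}(\mathrm{PTC}(n,k))>\mathrm{pn}(G_{2})$. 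Chaining these inequalities yields $\mathrm{pn}(\mathrm{PTC}(n,k))>\mathrm{pn}(G)$ whenever $G\neq\mathrm{PTC}(n,k)$, which is exactly the required uniqueness statement.

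The only small subtlety is to verify that the chain of transformations is well-defined, namely that each intermediate graph still belongs to $\mathcal{C}_{n,k}$ so that the next lemma applies; but this is exactly what each lemma asserts about its output. No further calculation is needed, and there is no real obstacle: the theorem is a genuine corollary of Lemmas \ref{Lemma_max1}--\ref{Lemma_max3}, and the proof reduces to a short two-sentence argument noting that at least one of the three lemmas applies whenever $G\neq\mathrm{PTC}(n,k)$.
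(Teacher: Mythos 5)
Your proposal is correct and matches the paper exactly: the paper simply states that Lemmas \ref{Lemma_max1}--\ref{Lemma_max3} immediately yield the theorem, and your chaining of the three transformations is precisely the intended argument spelled out in slightly more detail.
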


By Theorem~{\ref{Tm_max}}, it is useful to calculate the subpath number of
$\mathrm{PTC}(n,k)$.

\begin{lemma}
\label{Lemma_PTC} The subpath number of $\mathrm{PTC}(n,k)$ equals
\[
(n^{2}-4kn+14n+4k^{2}-28k+49)2^{k-2}+\tfrac{1}{2}(n^{2}-4kn-6n+4k^{2}%
-4k+7)+\delta,
\]
where $\delta=0$ if $n$ is odd, and $\delta=1-2^{k-2}$ if $n$ is even.
\end{lemma}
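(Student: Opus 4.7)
The plan is to prove the formula by a direct pair-counting argument, using the structural picture of a cactus chain and the fact (implicit in the proof of Lemma~\ref{Lemma_max2}) that in a cactus chain the number of paths joining two distinct vertices $x,y$ equals $2^{c(x,y)}$, where $c(x,y)$ counts the cycles on the unique chain-path in $T_G$ connecting them. Hence
\[
\mathrm{pn}(\mathrm{PTC}(n,k))=n+\sum_{\{x,y\},\,x\neq y}2^{c(x,y)},
\]
and the task reduces to evaluating this sum for the specific graph $\mathrm{PTC}(n,k)$.

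First I would describe the structure. Label the cycles in chain order $C_{1},\ldots,C_{k}$ and the intersection vertices $u_{1},\ldots,u_{k-1}$, with $C_{2},\ldots,C_{k-1}$ triangles and end-cycles $C_{1},C_{k}$ of lengths $a,b$. Counting vertices gives $a+b=n-2k+5$ and, since $|a-b|\leq1$, $ab=s^{2}/4$ when $n$ is odd and $ab=(s^{2}-1)/4$ when $n$ is even, where $s:=a+b$. I would then partition $V(\mathrm{PTC}(n,k))$ into bags: the singleton bags $\{u_{i}\}$, and the interior sets $B_{i}=V(C_{i})\setminus\{u_{i-1},u_{i}\}$, with $|B_{1}|=a-1$, $|B_{k}|=b-1$, and $|B_{i}|=1$ for $1<i<k$.

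Next I would read off the values of $c(x,y)$ from the bags of $x$ and $y$: same bag $B_{i}$ gives $c=1$; distinct $B_{i},B_{j}$ with $i<j$ give $c=j-i+1$; $x\in B_{i}$ and $y=u_{j}$ give $c=j-i+1$ if $i\le j$ and $c=i-j$ otherwise; two intersection vertices $u_{i},u_{j}$ with $i<j$ give $c=j-i$. With these rules the sum splits as $T_{BB}+T_{BJ}+T_{JJ}$ according to whether each endpoint is interior or an intersection vertex. Each of the three pieces becomes a weighted geometric double sum whose inner sums are handled by the standard identity $\sum_{d=1}^{N} d\cdot 2^{d}=(N-1)2^{N+1}+2$. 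The interior-triangle bags contribute clean geometric-series terms, while the two end-cycle bags $B_{1},B_{k}$ contribute the $a,b$-dependent corrections.

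Finally I would combine the pieces and substitute $s=n-2k+5$. The key algebraic observation is that the exponential contributions collapse via
\[
s^{2}\cdot 2^{k-2}+2^{k}(s+1)=2^{k-2}(s+2)^{2}=2^{k-2}(n-2k+7)^{2},
\]
producing exactly the coefficient $(n^{2}-4kn+14n+4k^{2}-28k+49)2^{k-2}$ of the statement. The remaining non-exponential part simplifies to a quadratic polynomial in $n$ and $k$, and the parity of $n$ enters only through the value of $ab$, which accounts for the correction $\delta$ in the $n$ even case. The main obstacle is entirely book-keeping: tracking the separate contributions of the two end-cycle bags, the $k-2$ interior triangle bags, and the $k-1$ intersection vertices without double-counting, and then performing the algebraic simplification so that the expression lines up with the claimed closed form.
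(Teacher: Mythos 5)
Your method is sound and is essentially the paper's own: for each unordered pair $\{x,y\}$ count the $2^{c(x,y)}$ paths joining them, organize the pairs by their position along the chain, and evaluate the resulting geometric double sums. Your identity $s^{2}\cdot 2^{k-2}+2^{k}(s+1)=2^{k-2}(s+2)^{2}$ is correct, and the exponential contribution does indeed collapse to $2^{k}(ab+s+1)=2^{k-2}(n-2k+7)^{2}$ (minus $2^{k-2}$ when $n$ is even), which matches the first term of the statement. The gap is the final step, where you assert without computation that the non-exponential remainder ``lines up with the claimed closed form'' and that the parity of $ab$ ``accounts for the correction $\delta$.'' It cannot, because the stated formula is itself false. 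For $n=5$, $k=2$ the graph $\mathrm{PTC}(5,2)$ is the bowtie (two triangles sharing a vertex); direct enumeration, your own counting scheme, and Theorem~\ref{Tm_min} (the bowtie has only end-triangles) all give $33$, whereas the displayed expression evaluates to $64\cdot 2^{0}+\tfrac{1}{2}(-30)=49$. For $n=6$, $k=2$ the expression evaluates to the non-integer $64.5$, while the true value is $48$. So no amount of bookkeeping within your (correct) framework can terminate at the stated right-hand side.

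Carrying your plan through carefully, the non-exponential remainder comes out to $\tfrac{1}{2}(n^{2}-4kn-6n+4k^{2}-4k-25)$ rather than $\tfrac{1}{2}(n^{2}-4kn-6n+4k^{2}-4k+7)$, i.e.\ the statement is too large by $16$ for odd $n$, and the even-$n$ correction is $\tfrac{1}{2}-2^{k-2}$ rather than $1-2^{k-2}$. (The same slip occurs in the paper's derivation: the identity $\sum_{i=1}^{t}i\cdot 2^{i-1}=t\cdot 2^{t+1}-(t+1)2^{t}$ used there is missing a $+1$ on the right-hand side, which propagates as an extra $16$ in the sum over pairs of interior triangles; the paper's intermediate expression before that reduction is correct and yields $33$ for the bowtie.) To repair your proof you must actually write out and evaluate the three double sums and report the corrected closed form; as written, the proposal asserts a conclusion that its own method refutes.
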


\begin{proof}
Denote by $C_{1},C_{2},\dots,C_{k}$ the cycles in the cactus chain
$\mathrm{PTC}(n,k)$, so that $C_{i}$ and $C_{i+1}$ have a vertex in common,
say $w_{i}$, where $1\le i\le k-1$. Assume that $|V(C_{1})|\ge|V(C_{k})|$.
Then $|V(C_{1})|=n_{1}=\lceil\frac{n-2k+5}2\rceil$, $|V(C_{k})|=n_{2}%
=\lfloor\frac{n-2k+5}2\rfloor$, and $|V(C_{i})|=3$ if $2\le i\le k-1$.

We count the number of $u-v$ paths when $u$ and $v$ are in a common cycle,
then the number of $u-v$ paths when $u$ and $v$ are in neighboring cycles but
not in their intersection, then the number of $u-v$ paths when $u\in V(C_{i})$
and $v\in V(C_{i+2})$ but $u,v\notin V(C_{i+1})$, etc. However, we sum the
number of paths in the opposite order.

Observe that if $u\in V(C_{i})\setminus\{w_{i}\}$ and $v\in V(C_{j}%
)\setminus\{w_{j-1}\}$, where $1\leq i<j\leq k$, then $\mathrm{PTC}(n,k)$
contains $2^{j-i+1}$ paths connecting $u$ with $v$ since in every cycle
$C_{i},C_{i+1},\dots,C_{j}$ we can choose one of the two possibilities of how
to traverse it. So we have
\begin{align*}
\mathrm{pn}(\mathrm{PTC}(n,k) ={}  &  (n_{1}-1)(n_{2}-1)2^{k}+\sum_{i=1}%
^{k-2}2(n_{1}-1)2^{i+1}+\sum_{i=1}^{k-2}2(n_{2}-1)2^{i+1}\\
&  +\sum_{i=1}^{k-3}2\cdot2\cdot2^{i+1}(k-2-i)+n_{1}^{2}+n_{2}^{2}%
+(k-2)3^{2}-(k-1),
\end{align*}
where the last term $-(k-1)$ appears since in $n_{1}^{2}+n_{2}^{2}+(k-2)3^{2}$
we counted the paths of length $0$ consisting of cut-vertices $w_{1}%
,w_{2},\dots,w_{k-1}$ twice.

Since $\sum_{i=1}^{t}2^{i}=2^{t+1}-2$, the sum of the first two sums is
$(n_{1}+n_{2}-2)(2^{k+1}-8)$. And since $\sum_{i=1}^{t}i\cdot2^{i-1}%
=t\cdot2^{t+1}-(t+1)2^{t}$, the third sum equals $(k-2)(2^{k+1}%
-16)-(k-3)2^{k+2}+(k-2)2^{k+1}=2^{k+2}-16k+32$. So the expression for the
subpath number of $\mathrm{PTC}(n,k)$ reduces to
\begin{align*}
\mathrm{pn}(\mathrm{PTC}(n,k)={}  &  (n_{1}-1)(n_{2}-1)2^{k}+(n_{1}%
+n_{2}-2)(2^{k+1}-8)+2^{k+2}-16k+32\\
&  +n_{1}^{2}+n_{2}^{2}+8k-17.
\end{align*}

Now if $n$ is odd, we get
\begin{align*}
\mathrm{pn}(\mathrm{PTC}(n,k) ={}  &  \bigg(\frac{n-2k+3}{2}\bigg)^{2}%
\cdot2^{k}+(n-2k+3)(2^{k+1}-8)+2^{k+2}\\
&  +2\bigg(\frac{n-2k+5}{2}\bigg)^{2}-8k+15,
\end{align*}
which reduces to
\begin{align*}
\mathrm{pn}(\mathrm{PTC}(n,k)={}  &  (n^{2}-4kn+14n+4k^{2}-28k+49)2^{k-2}\\
&  +\tfrac{1}{2}(n^{2}-4kn-6n+4k^{2}-4k+7).
\end{align*}

On the other side when $n$ is even, we get
\begin{align*}
\mathrm{pn}(\mathrm{PTC}(n,k)={}  &  \bigg(\frac{n-2k+4}{2}\bigg)\bigg(\frac
{n-2k+2}{2}\bigg)\cdot2^{k}+(n-2k+3)(2^{k+1}-8)+2^{k+2}\\
&  +\bigg(\frac{n-2k+6}{2}\bigg)^{2}+\bigg(\frac{n-2k+4}{2}\bigg)^{2}-8k+15,
\end{align*}
which reduces to
\begin{align*}
\mathrm{pn}(\mathrm{PTC}(n,k)={}  &  (n^{2}-4kn+14n+4k^{2}-28k+48)2^{k-2}\\
&  +\tfrac{1}{2}(n^{2}-4kn-6n+4k^{2}-4k+9).
\end{align*}

\end{proof}

\section{Minimal cacti with respect to the subpath number}

After we have characterized maximal cacti from $\mathcal{C}_{n,k}$ with
respect to the subpath number, our next goal is to establish minimal cacti in
the same class. We will use the same approach of graph transformation, in a
way that we will first address the size of cycles of $G$ by the transformation
inverse to the one of Lemma \ref{Lemma_max1}, which creates additional
bridges. Then we will address the interior cycles of $G$ and thus arrive to
all the minimal cacti in $\mathcal{C}_{n,k}$.

\begin{lemma}
\label{Lemma_min1}Let $G\in\mathcal{C}_{n,k}$ be a cactus graph. If $G$
contains a cycle which is not a triangle, then there exists a cactus graph
$G^{\prime}\in\mathcal{C}_{n,k}$ in which every cycle is a triangle such that
$\mathrm{pn}(G)>\mathrm{pn}(G^{\prime}).$
\end{lemma}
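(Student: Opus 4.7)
The plan is to apply the inverse of the transformation from Lemma~\ref{Lemma_max1} repeatedly, shrinking each cycle of length at least $4$ in $G$ by one vertex at a time until every cycle is a triangle. Since the move in Lemma~\ref{Lemma_max1} strictly increases the subpath number, its reverse will strictly decrease it, so each step of the iteration produces a cactus in $\mathcal{C}_{n,k}$ with smaller $\mathrm{pn}$.

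Concretely, for a cycle $C$ of $G$ with $g=|V(C)|\ge 4$, I would pick any vertex $u\in V(C)$ with neighbors $v,w$ on $C$. Since $g\ge 4$ and no cycle in a cactus has a chord, $vw\notin E(G)$. Define $G^{\ast}$ by removing $uw$ and adding $vw$. Then $V(C)\setminus\{u\}$ together with $vw$ is a cycle $C^{\ast}$ of length $g-1$, the edge $uv$ becomes a bridge of $G^{\ast}$ (any further cycles through $u$ share only the vertex $u$ with $C$, so once $uw$ is removed, $uv$ is the only link between $u$ and $V(C^{\ast})$), and all remaining cycles of $G$ are unaffected. Hence $G^{\ast}\in\mathcal{C}_{n,k}$.

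The key observation is that applying the transformation of Lemma~\ref{Lemma_max1} to $G^{\ast}$ — with bridge $uv$, cycle $C^{\ast}$, and neighbor $w$ of $v$ taken along the newly inserted edge $vw$ — removes $vw$ and inserts $uw$, precisely reversing my shrink move and returning $G$. Lemma~\ref{Lemma_max1} therefore yields $\mathrm{pn}(G^{\ast})<\mathrm{pn}(G)$. Iterating on each non-triangle cycle of $G$ terminates with a cactus $G'\in\mathcal{C}_{n,k}$ whose cycles are all triangles and for which $\mathrm{pn}(G')<\mathrm{pn}(G)$.

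Since the argument is a direct reversal of Lemma~\ref{Lemma_max1}, there is no real combinatorial obstacle; I expect the only nontrivial bookkeeping to be the verification that $G^{\ast}$ is a genuine cactus in $\mathcal{C}_{n,k}$ (in particular that $vw$ was not already an edge of $G$, which follows from the chord-free property of cactus cycles) and that the hypotheses of Lemma~\ref{Lemma_max1} are satisfied by $G^{\ast}$ with this choice of bridge, cycle, and neighbor.
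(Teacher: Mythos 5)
Your proof is correct and follows essentially the same route as the paper: the paper likewise shrinks a non-triangle cycle by detaching one vertex, observes that this move is exactly the inverse of the transformation in Lemma~\ref{Lemma_max1}, and iterates. Your extra bookkeeping (that $vw\notin E(G)$ by chord-freeness and that $uv$ becomes a bridge of $G^{\ast}$) is precisely what the paper leaves implicit.
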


\begin{proof}
Let $C$ be a cycle of $G$ which is not a triangle and let $uv$ be an edge of
$C.$ Denote by $w$ the other neighbor of $u$ on $C.$ Let $G^{\prime}$ be the
graph obtained from $G$ by removing the edge $uv$ and adding the edge $vw.$ We
may consider that the graph $G$ is obtained from $G^{\prime}$ by removing the
edge $vw$ from it and adding the edge $uw$ instead. Then Lemma
\ref{Lemma_max1} implies $\mathrm{pn}(G)>\mathrm{pn}(G^{\prime}).$ Applying
the transformation repeatedly yields the result.
\end{proof}

An end-cycle of $G$ which is a triangle will be called an \emph{end-triangle}.
Observe that end-triangle has only one vertex whose degree is greater than
$2$. We have reduced the problem of finding minimal cacti to the class of
cacti in which every cycle is a triangle. Let us further show that no triangle
of minimal cacti can be interior, i.e., minimal cacti have only end-triangles.

\begin{lemma}
\label{Lemma_min2}Let $G\in\mathcal{C}_{n,k}$ be a cactus graph in which every
cycle is a triangle. If there exists an interior triangle in $G,$ then there
exists a cactus graph $G^{\prime}\in\mathcal{C}_{n,k}$ in which every cycle is
an end-triangle such that $\mathrm{pn}(G)>\mathrm{pn}(G^{\prime}).$
\end{lemma}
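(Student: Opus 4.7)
Since $T$ is an interior triangle, at least two of its vertices have degree greater than two; after relabelling I may assume these are $a$ and $b$. Because $T$ is a block of the cactus $G$, the graph $G-E(T)$ decomposes into three connected components $H_{a}$, $H_{b}$, $H_{c}$ containing $a$, $b$, $c$ respectively, and each of $H_{a}$ and $H_{b}$ contains at least one vertex other than $a$, $b$. My transformation is to ``reattach $H_{b}$ at $a$'': let $G'$ be obtained from $G$ by replacing every edge $bx$ with $x\in V(H_{b})\setminus\{b\}$ by the edge $ax$. Since $a$ and $b$ can have no common neighbour outside $T$ (otherwise a second cycle would pass through the edge $ab$, violating the cactus property), the transformation is well-defined; moreover $G'\in\mathcal{C}_{n,k}$, every cycle of $G'$ is still a triangle, and $b$ has degree exactly two in $G'$.

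To prove that $\mathrm{pn}(G)>\mathrm{pn}(G')$ I mimic the pair-by-pair analysis from Lemma~\ref{Lemma_max2}. For any pair of vertices $x,y$ set $\Delta(x,y)=\mathrm{pn}_{G}(x,y)-\mathrm{pn}_{G'}(x,y)$, and partition the pairs of $V(G)$ according to which of the four parts $V(H_{a})$, $V(H_{b})\setminus\{b\}$, $\{b\}$, $V(H_{c})$ each endpoint lies in. A direct computation shows that $\Delta(x,y)=0$ for every pair except two families. For $x\in V(H_{a})$ and $y\in V(H_{b})\setminus\{b\}$, the $x$-$y$ paths in $G$ traverse $T$ and thus acquire a factor of two, whereas in $G'$ the pair sits inside a single subcactus meeting at $a$, so $\Delta(x,y)=\mathrm{pn}_{H_{a}}(x,a)\cdot\mathrm{pn}_{H_{b}}(b,y)>0$. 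For $x=b$ and $y\in V(H_{b})\setminus\{b\}$ the roles reverse: the two vertices lie in the same component $H_{b}$ in $G$ and no triangle is used, while in $G'$ the path must cross $T$, yielding $\Delta(b,y)=-\mathrm{pn}_{H_{b}}(b,y)<0$.

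Summing these contributions gives
\[
\mathrm{pn}(G)-\mathrm{pn}(G')=\bigg(\sum_{x\in V(H_{a})}\mathrm{pn}_{H_{a}}(x,a)-1\bigg)\sum_{y\in V(H_{b})\setminus\{b\}}\mathrm{pn}_{H_{b}}(b,y).
\]
The second factor is positive because $V(H_{b})\setminus\{b\}$ is nonempty, and the first is at least one because $|V(H_{a})|\geq 2$ forces $\sum_{x\in V(H_{a})}\mathrm{pn}_{H_{a}}(x,a)\geq 2$. Hence $\mathrm{pn}(G)>\mathrm{pn}(G')$.

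Finally, the transformation decreases the number of vertices of degree greater than two by exactly one (namely $b$, whose degree drops to two, while no other degree changes), so applying it repeatedly as long as any interior triangle remains terminates after finitely many steps in a graph $G'\in\mathcal{C}_{n,k}$ whose every triangle is an end-triangle, as required. The main obstacle in this plan is the $\Delta$-case analysis; the delicate point is that the single family of negative contributions, coming from pairs $(b,y)$ with $y\in V(H_{b})\setminus\{b\}$, is dominated by the gain on the pairs in $V(H_{a})\times(V(H_{b})\setminus\{b\})$ precisely because $a$ has degree greater than two in $G$.
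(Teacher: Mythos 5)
Your proposal is correct and uses essentially the same argument as the paper: the identical transformation (reattaching the component hanging at one high-degree vertex of the interior triangle to the other high-degree vertex), followed by the same pair-by-pair comparison of path counts, where the loss on pairs $(b,y)$ is absorbed by the gain on pairs $(x,y)$ with $x\in V(H_a)$. Your explicit factored formula for $\mathrm{pn}(G)-\mathrm{pn}(G')$ and the degree-counting termination argument are slightly more detailed than the paper's presentation, but the underlying reasoning is the same.
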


\begin{proof}
Let $C=u_{1}u_{2}u_{3}u_{1}$ be an interior triangle of $G.$ We may assume
that the degrees of the vertices $u_{1}$ and $u_{2}$ on the cycle $C$ are
greater than $2.$ Denote by $G_{i}$ the connected component of $G-E(C)$ which
contains the vertex $u_{i},$ for $i=1,2,3.$ Let $G^{\prime}$ be a graph
obtained from $G$ by removing the edge $xu_{2}$ and adding the edge $xu_{1},$
for every vertex $x$ of $G_{2}$ adjacent to $u_{2}$. Notice that
$\mathrm{pn}_{G}(a,b)$ increases only if $a=u_{2}$ and $b\in V(G_{2}%
)\setminus\{u_{2}\}$. But if $b\in V(G_{2})\setminus\{u_{2}\}$ then
$\mathrm{pn}_{G}(b,u_{2})+\mathrm{pn}_{G}(b,u_{1})=\mathrm{pn}_{G^{\prime}%
}(b,u_{1})+\mathrm{pn}_{G^{\prime}}(b,u_{2})$. And since for $a\in
V(G_{2})\setminus\{u_{2}\}$ and $b\in V(G_{1})\setminus\{u_{1}\}$ we have
$\mathrm{pn}_{G}(a,b)>\mathrm{pn}_{G^{\prime}}(a,b)$, we conclude that
$\mathrm{pn}(G)>\mathrm{pn}(G^{\prime})$.
Applying the transformation repeatedly yields the result.
\end{proof}

\begin{figure}[h]
\begin{center}%
\begin{tabular}
[t]{lll}%
\raisebox{-0.9\height}{\includegraphics[scale=0.6]{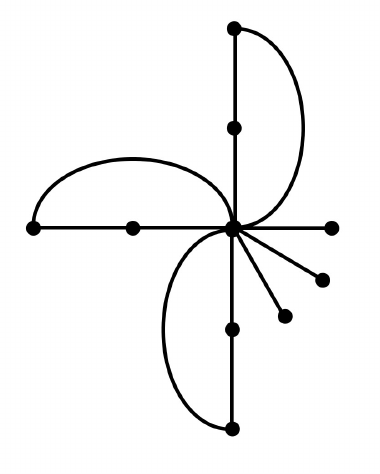}} &
\raisebox{-0.9\height}{\includegraphics[scale=0.6]{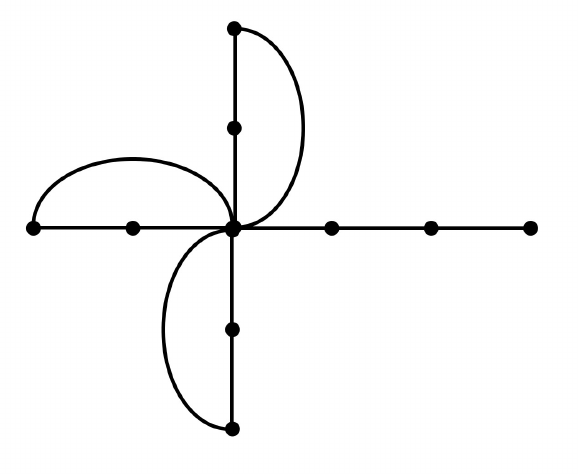}} &
\raisebox{-0.9\height}{\includegraphics[scale=0.6]{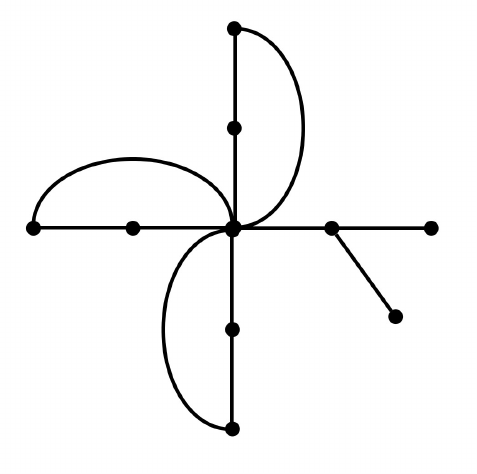}}
\end{tabular}
\end{center}
\caption{The figure shows three distinct cactus graphs from $\mathcal{C}%
_{10,3}.$ All these graphs minimize the subpath number in $\mathcal{C}%
_{10,3}.$ The leftmost graph is the pseudo friendship graph $\mathrm{PFG}%
(10,3)$ and only this graph minimizes Wiener index and maximizes the number of
subtrees over $\mathcal{C}_{10,3}.$}%
\label{Fig_PFG}%
\end{figure}

Figure \ref{Fig_PFG} shows three distinct graphs from $\mathcal{C}_{10,3}$ in
which every cycle is an end-triangle. In the next theorem we show that all
such graphs have the same subpath number, so they all minimize the subpath number.

\begin{theorem}
\label{Tm_min} A cactus graph $G\in\mathcal{C}_{n,k}$ has a minimum possible
value of the subpath number if and only if every cycle of $G$ is an end-triangle.
\end{theorem}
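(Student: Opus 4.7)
My plan is to split the equivalence into two directions. The ``only if'' direction follows immediately from the previous two lemmas: if $G \in \mathcal{C}_{n,k}$ minimizes the subpath number and some cycle of $G$ is not a triangle, Lemma \ref{Lemma_min1} would produce a cactus in $\mathcal{C}_{n,k}$ with strictly smaller subpath number, a contradiction; and if some triangle of $G$ is interior, Lemma \ref{Lemma_min2} gives the same contradiction. Hence every cycle of a minimizer must be an end-triangle.

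For the ``if'' direction, I would show that any $G \in \mathcal{C}_{n,k}$ in which every cycle is an end-triangle has a subpath number depending only on $n$ and $k$; combined with the first direction, this will imply that all such $G$ attain the minimum. The key structural observation is that if every cycle of $G$ is an end-triangle, then removing the two degree-two vertices of each end-triangle leaves a tree $T$ on $n - 2k$ vertices. I would call the vertices of $T$ the tree-vertices and the remaining $2k$ vertices the triangle-vertices.

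I would then verify, by a short case analysis, that for any two distinct vertices $u, v$ the number $\mathrm{pn}_G(u, v)$ depends only on the types of $u$ and $v$, and in the case of two triangle-vertices on whether they lie in a common end-triangle: one path when both are tree-vertices; two paths when exactly one is a triangle-vertex, or when both are triangle-vertices in the same triangle; and four paths when both are triangle-vertices in distinct triangles. The underlying reason is that an end-triangle has a unique cut-vertex, so a simple path can use an end-triangle only if at least one of its endpoints lies inside that triangle.

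The proof then concludes by summing over unordered pairs together with the $n$ trivial paths. The numbers of pairs of each type, namely $\binom{n - 2k}{2}$, $2k(n - 2k)$, $k$, and $2k(k - 1)$, depend only on $n$ and $k$, so $\mathrm{pn}(G)$ does as well. The only point requiring care is the case analysis for $\mathrm{pn}_G(u, v)$, particularly when an endpoint is a cut-vertex shared by several end-triangles; this is settled by the observation that a simple path entering an end-triangle through its cut-vertex cannot leave it without revisiting that vertex.
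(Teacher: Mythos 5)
Your proposal is correct and follows essentially the same route as the paper: reduce via Lemmas \ref{Lemma_min1} and \ref{Lemma_min2} to cacti whose cycles are all end-triangles, then show by the same case analysis on pairs of vertices (one, two, or four connecting paths according to whether the endpoints are degree-two triangle vertices and whether they share a triangle) that the subpath number of any such cactus depends only on $n$ and $k$. The pair counts you list agree with the paper's formula, so no further changes are needed.
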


\begin{proof}
Lemmas \ref{Lemma_min1} and \ref{Lemma_min2} imply that it is sufficient to
establish that all cactus graphs of $\mathcal{C}_{n,k}$ in which every cycle
is an end-triangle have the same value of the subpath number. To see this, let
$G\in\mathcal{C}_{n,k}$ be such a cactus graph, and let us partition the set
of vertices $V_{1}$ and $V_{2},$ so that $V_{1}$ consists of all vertices of
cycles in $G$ which have the degree two and $V_{2}=V(G)\backslash V_{1}$.
Notice that $\mathrm{pn}_{G}(x,y)=1$ if and only if both $x$ and $y$ belong to
$V_{2}$. Further, $\mathrm{pn}_{G}(x,y)=2$ for $x\in V_{1}$ and either $y\in
V_{2}$ or $y$ belongs to the same triangle as $x$. Finally, $\mathrm{pn}%
_{G}(x,y)=4$ if $x,y\in V_{1}$ such that $x$ and $y$ belong to distinct
triangles. We conclude that
\[
\mathrm{pn}(G)=n+\binom{\left\vert V_{2}\right\vert }{2}+2k+2\left\vert
V_{1}\right\vert \left\vert V_{2}\right\vert +4\binom{k}{2}\cdot4.
\]
Plugging in $\left\vert V_{1}\right\vert =2k$ and $\left\vert V_{2}\right\vert
=n-2k,$ we obtain%
\[
\mathrm{pn}(G)=2k^{2}+2kn-5k+\frac{1}{2}n^{2}+\frac{1}{2}n.
\]
Since the expression for $\mathrm{pn}(G)$ depends only on the number of
vertices and cycles, we are done.
\end{proof}

\bigskip

Now that we have characterized cactus graphs from $\mathcal{C}_{n,k}$ which
maximize and minimize the subpath number, we can summarize our results in the
following corollary, which is a direct consequence of Theorems \ref{Tm_max}
and \ref{Tm_min}.

\begin{corollary}
\label{Cor_cactiPDS}For a cactus graph $G\in\mathcal{C}_{n,k},$ it holds that
\begin{align*}
\tfrac{1}{2}(n^{2}+4kn+n+4k^{2}-10k)\leq\mathrm{pn}(G)\leq\  &  (n^{2}%
-4kn+14n+4k^{2}-28k+49)2^{k-2}\\
&  +\tfrac{1}{2}(n^{2}-4kn-6n+4k^{2}-4k+7)+\delta,
\end{align*}
where $\delta=0$ if $n$ is odd, and $\delta=1-2^{k-2}$ if $n$ is even. The
left inequality is attained if and only if every cycle of $G$ is an
end-triangle, and the right inequality is attained if and only if
$G=\mathrm{PTC}(n,k).$
\end{corollary}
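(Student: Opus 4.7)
The plan is to assemble the statement directly from the results already proved. Theorem \ref{Tm_max} characterises the unique maximiser over $\mathcal{C}_{n,k}$ as $\mathrm{PTC}(n,k)$, so the right inequality becomes an identity precisely when $G=\mathrm{PTC}(n,k)$; the explicit expression on the right, including the parity correction $\delta$, is exactly the content of Lemma \ref{Lemma_PTC}. Dually, Theorem \ref{Tm_min} tells us that the minimisers of $\mathrm{pn}$ over $\mathcal{C}_{n,k}$ are exactly those cacti in which every cycle is an end-triangle. Therefore the characterisation of the extremal graphs in both inequalities is immediate.

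It remains only to identify the numerical value at the minimum. For this I would just quote the closed form derived inside the proof of Theorem \ref{Tm_min}, namely
\[
\mathrm{pn}(G)=2k^{2}+2kn-5k+\tfrac{1}{2}n^{2}+\tfrac{1}{2}n
\]
for any $G\in\mathcal{C}_{n,k}$ all of whose cycles are end-triangles, and then observe that this equals $\tfrac12(n^{2}+4kn+n+4k^{2}-10k)$, which is the left-hand side appearing in the corollary. A one-line algebraic check suffices.

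There is no real obstacle in this proof, since every ingredient has been established: the extremal structure from Theorems \ref{Tm_max} and \ref{Tm_min}, the exact value for $\mathrm{PTC}(n,k)$ from Lemma \ref{Lemma_PTC}, and the exact value for the end-triangle cacti from the computation inside the proof of Theorem \ref{Tm_min}. Assembling these three facts, along with the trivial algebraic rewriting of the minimum value in the form displayed in the statement, completes the argument.
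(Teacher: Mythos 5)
Your proposal is correct and follows exactly the route the paper intends: the corollary is a direct assembly of Theorem \ref{Tm_max}, Lemma \ref{Lemma_PTC}, Theorem \ref{Tm_min}, and the closed form $2k^{2}+2kn-5k+\tfrac12 n^{2}+\tfrac12 n$ computed in the proof of Theorem \ref{Tm_min}, which indeed equals $\tfrac12(n^{2}+4kn+n+4k^{2}-10k)$. Nothing further is needed.
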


\bigskip

\begin{figure}[h]
\begin{center}
\includegraphics[scale=0.6]{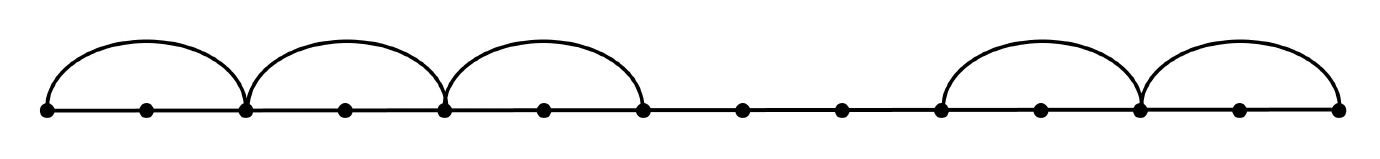}
\end{center}
\caption{The balanced saw graph $\mathrm{BSG}(14,5).$}%
\label{Fig_BSG}%
\end{figure}

Let us now compare extremal cacti with respect to the subpath number to those
with respect to the Wiener index and the number of subtrees. To do that, we
first need to introduce some particular cacti from the class $\mathcal{C}%
_{n,k}$. The \emph{balanced saw} graph $\mathrm{BSG}(n,k)$ is a cactus graph
from $\mathcal{C}_{n,k}$ obtained by joining a vertex of an end of a triangle
chain with $\left\lceil k/2\right\rceil $ cycles to a vertex of a triangle
chain with $\left\lfloor k/2\right\rfloor $ cycles by a path with $n-2k-2$
interior vertices. This notion is illustrated by Figure \ref{Fig_BSG}. A
\emph{pseudo friendship} graph $\mathrm{PFG}(n,k)$ is a cactus graph from
$\mathcal{C}_{n,k}$ obtained from $k$ triangles, all sharing a common vertex,
and $n-2k-1$ pendant edges attached to the same vertex. An example of a pseudo
friendship graph is the leftmost graph from Figure \ref{Fig_PFG}.

In \cite{CactiWienerMinimum,Gutman2017} the following result on the Wiener
index is established.

\begin{theorem}
\label{Tm_cactiWiener} The graph $\mathrm{BSG}(n,k)$ uniquely maximizes and
the graph $\mathrm{PFG}(n,k)$ uniquely minimizes the Wiener index among all
cacti from $\mathcal{C}_{n,k}$.
\end{theorem}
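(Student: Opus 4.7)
The plan is to prove the two bounds separately, each by a sequence of Wiener-index-monotone graph transformations on $\mathcal{C}_{n,k}$, in the same spirit as Lemmas \ref{Lemma_max1}--\ref{Lemma_max3} and Lemmas \ref{Lemma_min1}--\ref{Lemma_min2} used here for the subpath number. The key heuristic is that, consistently with the ``negative correlation'' phenomenon discussed in the introduction, the direction in which each natural cactus transformation pushes $W(G)$ tends to be opposite to its effect on $\mathrm{pn}(G)$.

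For the lower bound (uniqueness of $\mathrm{PFG}(n,k)$), I would proceed in three steps. First, replace every cycle of length $\geq 4$ by a triangle with pendant edges attached at a chosen cut vertex of the cycle; a short direct distance computation shows $W$ strictly decreases, using the classical fact that among unicyclic graphs on $m$ vertices the triangle-with-pendants uniquely minimizes $W$. Second, once every cycle is a triangle, if two triangles do not share a common cut vertex, perform a block-relocation that detaches one triangle and re-attaches it at a fixed ``center'' vertex $c$; careful bookkeeping shows that every pair of the form (vertex of the moved triangle, other vertex) has its distance weakly decreased, with strict inequality for at least one pair. Third, apply the classical pendant-shifting move to pull every tree vertex not already adjacent to $c$ onto $c$, strictly decreasing $W$ at each step. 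The only terminal configuration is $\mathrm{PFG}(n,k)$.

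For the upper bound (uniqueness of $\mathrm{BSG}(n,k)$), the transformations go in the opposite direction. First, cycles of length $\geq 4$ must be opened up: take a degree-two vertex $v$ on such a cycle, delete $v$ from the cycle, close the cycle into a triangle, and re-attach $v$ (together with a short path) at the end of a longest branch of $G$. Second, whenever the cycle-incidence tree of $G$ branches at an internal cut vertex, perform a branch-relocation that moves an entire branch onto the tip of a longest branch; iterating reduces $G$ to a cactus chain of triangles joined to possibly one long bridge-path in the middle. Third, a balancing argument shows that the triangles must be distributed as $\lceil k/2\rceil$ and $\lfloor k/2\rfloor$ at the two ends of the chain, since moving a single triangle across whenever $|k_1-k_2|\geq 2$ strictly increases $W$. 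The unique terminal configuration is $\mathrm{BSG}(n,k)$.

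The main obstacle will be the branch-relocation step in the upper bound, since the change in $W$ is a signed sum of many pairwise-distance changes of competing signs, and the usual unicyclic or tree versions of such lemmas do not transfer verbatim to cacti. I expect the cleanest route is to prove the following shifting lemma and then invoke it uniformly in all three spreading steps: if $H$ is a connected subgraph of $G$ attached to the rest of $G$ at a single cut vertex $c$, and if $v\in V(G)\setminus V(H)$ is chosen to maximize $d_G(v,c)$, then detaching $H$ from $c$ and reattaching it at $v$ strictly increases $W(G)$ whenever $|V(H)|\leq |V(G)|-|V(H)|$. Given this lemma, both the ``unbranching'' and the ``balancing'' reductions become direct applications, and the symmetric ``concentrating'' lemma (attach $H$ to the closest rather than the farthest vertex) drives the minimum case in the same uniform way.
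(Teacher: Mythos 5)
First, a point of comparison: the paper does not prove Theorem~\ref{Tm_cactiWiener} at all --- it is quoted from \cite{CactiWienerMinimum} and \cite{Gutman2017} --- so there is no in-paper argument to measure your proposal against. Your overall architecture (a chain of Wiener-monotone cactus transformations terminating in the extremal graph) is indeed the strategy of those references, so the plan is reasonable in outline.

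However, the one concrete tool you commit to, the ``shifting lemma,'' is false as stated, and the entire upper-bound half rests on it. Write $G_0=G-(V(H)\setminus\{c\})$ and $h=|V(H)|-1$. Relocating $H$ from $c$ to $v$ changes the Wiener index by exactly
\[
\Delta W \;=\; h\bigl(\sigma_{G_0}(v)-\sigma_{G_0}(c)\bigr),
\]
where $\sigma_{G_0}(x)=\sum_{y\in V(G_0)}d_{G_0}(x,y)$ is the transmission of $x$ in $G_0$. Two consequences: (i) your size hypothesis $|V(H)|\le |V(G)|-|V(H)|$ plays no role in the sign of $\Delta W$; (ii) what is actually needed is $\sigma_{G_0}(v)>\sigma_{G_0}(c)$, and choosing $v$ to maximize $d_{G_0}(v,c)$ does not guarantee this. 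Concretely, let $G_0$ be a broom: a path $p_0p_1p_2p_3$ with $m\ge 5$ pendant leaves attached at $p_0$, and take $c=p_2$ with $H$ a single triangle glued at $c$. The vertices of $G_0$ farthest from $c$ are the leaves $\ell$, at distance $3$, yet $\sigma_{G_0}(c)-\sigma_{G_0}(\ell)=(3m+4)-(2m+8)=m-4>0$, so moving $H$ from $c$ to $\ell$ strictly \emph{decreases} $W$. The correct formulation (and the one the cited papers use) relocates branches to a vertex of maximum \emph{transmission}, not maximum distance. Beyond this, the terminal-configuration analysis for $\mathrm{BSG}(n,k)$ is underspecified: your moves do not explain why the bridge-path must sit between the two triangle chains rather than hang pendant at one end, nor why the $k$ triangles concentrate at the two extremities instead of spreading along the chain. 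The minimization half is closer to standard branch-contraction arguments, but as written the proposal is a program whose central lemma must be replaced before any of the three ``spreading'' steps can be carried out.
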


In \cite{Cacti2022, CactiWienerMinimum} the following result is established
regarding the subtree index, which is yet another fact supporting the
observation that minimal graphs for the Wiener index maximize the subtree
index and vice versa.

\begin{theorem}
\label{Tm_cactiSubtree} The graph $\mathrm{PFG}(n,k)$ uniquely maximizes and
the graph $\mathrm{BSG}(n,k)$ uniquely minimizes the subtree index among all
cacti from $\mathcal{C}_{n,k}$.
\end{theorem}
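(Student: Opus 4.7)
The plan is to establish Theorem~\ref{Tm_cactiSubtree} by local graph transformations in the spirit of Sections~3 and~4, but tracking the subtree count $N(G)$ instead of $\mathrm{pn}(G)$. The key structural tool is the block-decomposition of $N(G)$: for any cut vertex $v$ of a cactus $G$, a subtree containing $v$ restricts independently to a subtree containing $v$ in each block incident with $v$, so the number of subtrees containing $v$ equals $\prod_{B\ni v} N_B(v)$, while subtrees avoiding $v$ split across the components of $G-v$. Each triangle block contributes a factor of $4$ at its ``root'' vertex and a pendant edge contributes a factor of $2$. This factorisation reduces every global comparison to a local comparison of small subtree-counting polynomials.

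For the maximum, I would prove three strict-increase transformations converging to $\mathrm{PFG}(n,k)$: (a) any cycle of length $\geq 4$ is replaced by a triangle with the surplus vertices re-attached as pendants at an intersection vertex, which strictly increases $N(G)$ because the new pendant-edge factors of $2$ dominate what is lost on the former long cycle; (b) two triangles connected through an intermediate block are shifted to share a common vertex, strictly increasing $N(G)$ by replacing the intermediate contribution with a strictly larger local factor; (c) any remaining pendant tree is rerouted into a star at the common vertex, which is the classical subtree-maximising rearrangement on trees. Iterating these strictly monotone moves yields $\mathrm{PFG}(n,k)$ as the unique maximiser.

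For the minimum, the dual transformations yield $\mathrm{BSG}(n,k)$: a non-triangular cycle is opened so only a triangle remains and the rest becomes a pendant path; triangles sharing a vertex are pulled apart into a chain; tree branches attached to interior chain vertices are absorbed into the central connecting path; finally the two resulting triangle chains are balanced to have $\lceil k/2\rceil$ and $\lfloor k/2\rfloor$ triangles. The main obstacle is this last balancing step. Unlike the subpath-number case, where the analogous inequality was essentially linear and handled by injections between endpoint pairs, the subtree count is multiplicative across blocks, so balancing becomes a discrete convexity claim about the single-variable function $k_1 \mapsto N(T_{k_1})\cdot N(T_{k-k_1})$, where $T_j$ denotes a triangle chain of length $j$ rooted at an end vertex. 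Verifying this convexity through an explicit linear recurrence for the pointed subtree-generating polynomial of $T_j$, and simultaneously ruling out any branching off the central path, is the technical heart of the argument; once this is in place, transformations (a)--(c) reduce to routine block-wise product comparisons.
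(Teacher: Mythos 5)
This theorem is not proved in the paper at all: it is quoted from the literature (\cite{Cacti2022, CactiWienerMinimum}), so there is no in-paper argument to compare yours against. Judged on its own terms, your text is a plan rather than a proof: every decisive inequality is asserted, and the one you yourself flag as the technical heart (the discrete convexity of $k_1\mapsto N(T_{k_1})\cdot N(T_{k-k_1})$, plus ruling out attachments at interior vertices of the connecting path) is left entirely open. There are also concrete problems with the steps you do describe. Your block factorisation is misstated: a subtree through a cut vertex $v$ factors over the \emph{branches} at $v$, not over the blocks incident with $v$, since a subtree may extend past a block into further blocks; the product $\prod_{B\ni v}N_B(v)$ is only correct when each block at $v$ is an entire branch (as in $\mathrm{PFG}(n,k)$, but not in the intermediate graphs your transformations pass through). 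More seriously, transformation (a) is not always strictly increasing: a $4$-cycle and a triangle with one pendant edge both have exactly $16$ subtrees, so for an isolated $4$-cycle (e.g.\ $\mathcal{C}_{4,1}$) the move changes nothing, and in general the strictness depends on the pair (subtrees through the attachment vertex, subtrees avoiding it) changing from $(10,6)$ to $(12,4)$ and on the multiplier contributed by the rest of the graph being larger than $1$. This also shows that the uniqueness claim needs a hypothesis excluding small cases, which your argument does not supply.

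If you want to actually prove the statement rather than cite it, the transformation framework you sketch is the right general shape (it is essentially what \cite{Cacti2022} does for the maximum), but each move must be accompanied by an explicit computation of the pointed subtree counts $(N_B(v),\,N_B\setminus N_B(v))$ before and after, with the rest of the graph entering through a multiplicative parameter $M\ge 1$, and with the cases $M=1$ handled separately. For the minimum, you additionally need the recurrence for the pointed subtree polynomial of a triangle chain and the resulting log-convexity/balancing inequality; without these the argument does not close.
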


Comparing extremal graphs from Theorems \ref{Tm_cactiWiener} and
\ref{Tm_cactiSubtree} with the extremal graphs from Corollary
\ref{Cor_cactiPDS}, it is observable that the graph $\mathrm{BSG}(n,k)$ which
uniquely maximizes the Wiener index is distinct from the graph $\mathrm{PTC}%
(n,k)$ which uniquely maximizes the subpath number. As for the graph
$\mathrm{PFG}(n,k)$ which uniquely minimizes the Wiener index (resp. uniquely
maximizes the subtree index), it minimizes the subpath number also, but not
uniquely, as there are many more graphs which minimize the subpath number and
which are not minimal with respect to the Wiener index, see Figure
\ref{Fig_PFG}.

\bigskip

\vskip1pc \noindent\textbf{Acknowledgments.}~~  This work is partially
supported by Slovak research grants VEGA 1/0069/23, VEGA 1/0011/25,
APVV-22-0005 and APVV-23-0076, by Slovenian Research and Innovation Agency
ARIS program\ P1-0383, project J1-3002 and the annual work program of
Rudolfovo, by Project KK.01.1.1.02.0027 co-financed by the Croatian Government
and the European Union through the European Regional Development Fund - the
Competitiveness and Cohesion Operational Programme, by bilateral
Croatian-Slovenian project BI-HR/25-27-004 and by the Key Scientific and
Technological Project of Henan Province, China (grant nos. 242102521023), the
Youth Project of Humanities and Social Sciences Research of the Ministry of
Education (No. 24YJCZH199).



\end{document}